\font\tencyr=wncyr10
\font\sevencyr=wncyr7
\font\fivecyr=wncyr5
\def\cyr{\fam\cyrfam\tencyr\cyracc}
\newtheorem{theorem}{Theorem}
\newtheorem{proposition}[theorem]{Proposition}
\theoremstyle{definition}
\newtheorem*{remark}{Remark}
\newtheorem*{heuristic1}{Heuristic for $M_k(T)$}
\newtheorem*{heuristic2}{Heuristic for $M'_k(T)$}
\newcommand{\ttt}{\scriptscriptstyle}
\newcommand{\Q}{\mathbb Q}
\newcommand{\C}{\mathbb C}
\newcommand{\Z}{\mathbb Z}
\newcommand{\Hy}{\mathbb H}
\newcommand{\F}{\mathbb F}
\newcommand{\Cl}{\mathrm{Cl}}
\newcommand{\TSg}{\mbox{{\cyr X}}}
\def\TSg{\mbox{{\cyr X}}}
\def\({\left(}
\def\){\right)}
\DeclareMathOperator{\vol}{vol}
\DeclareMathOperator{\lcm}{lcm}
\def\squareforqed{\hbox{\rlap{$\sqcap$}$\sqcup$}}\def\qed{\ifmmode\squareforqed\else{\unskip\nobreak\hfil
\penalty50\hskip1em\null\nobreak\hfil\squareforqed
\parfillskip=0pt\finalhyphendemerits=0\endgraf}\fi}
\begin{document}

\pagestyle{headings}

\title{Regulators of rank one quadratic twists}


\author{Christophe DELAUNAY \\[5pt]
  Xavier-Fran\c{c}ois ROBLOT \\[3pt]
  \small Universit\'e de Lyon,
  Universit\'e Lyon 1,\\
  \small CNRS, UMR 5208 Institut Camille Jordan,\\
  \small Batiment du Doyen Jean Braconnier,\\
  \small 43, blvd du 11 novembre 1918,\\
  \small F - 69622 Villeurbanne Cedex, France \\[3pt]
  \small email: \texttt{\{delaunay,roblot\}@math.univ-lyon1.fr} \\
  \small url:
  \texttt{http://math.univ-lyon1.fr/$\sim$\{delaunay,roblot\}} \\[5pt]
}

\maketitle

\begin{abstract}
  We investigate the regulators of elliptic curves with rank 1 in some
  families of quadratic twists of a fixed elliptic curve. In
  particular, we formulate some conjectures on the average size of
  these regulators. We also describe an efficient algorithm to compute
  explicitly some of the invariants of a rank one quadratic twist of
  an elliptic curve (regulator, order of the Tate-Shafarevich group,
  etc.) and we discuss the numerical data that we obtain and compare
  it with our predictions.
\end{abstract}

\bigskip

\section{Introduction and notations}

We study the regulators of elliptic curves of rank 1 in a family of
quadratic twists of a fixed elliptic curve $E$ defined over
$\Q$. Methods coming from Random Matrix Theory, as developed in
\cite{keating-snaith}, \cite{CKRS}, \cite{CFKRS}, etc., allow us to
derive precise conjectures for the moments of those regulators. Our
hope is that these moments will help to make predictions for the
number of curves with extra-rank (i.e. the number of even quadratic
twists\footnote{An odd (resp. even) quadratic twist of $E$ is a
  quadratic twist such that the sign of the functional equation of its
  $L$-function is $-1$ (resp. $+1$). By the Birch and Swinnerton-Dyer
  conjecture this is equivalent to say that its Mordell-Weil rank is
  odd (resp. even)} with a Mordell-Weil rank $\geq 2$, or the number
of odd quadratic twists with Mordell-Weil rank $\geq 3$).  Then, we
describe an efficient method, using Heegner-point construction, for
computing the regulator (and the order of the Tate-Shafarevich group)
of an elliptic curve of rank 1 in a family of quadratic
twists. Finally, we discuss and compare our extensive numerical data
(for some families of odd quadratic twists of the curves $11a1$,
$14a1$, $15a1$ and $17a1$) with our predictions.

\medskip 

From a numerical and experimental point of view, the situation of odd
quadratic twists really differs from the one of even quadratic
twists. Indeed, in the latter case, for each curve $E_d$ in a family
$(E_d)_d$ of even quadratic twists of a fixed elliptic curve $E$, one
has to compute the special value $L(E_d,1)$ of its $L$-function at
$s=1$ and determine if it is zero or not. If $L(E_d,1)=0$ then the
curve $E_d$ has extra-rank. Otherwise the curve has
rank $0$, the regulator is simply $1$, and the Birch and
Swinnerton-Dyer conjecture allows us to deduce the value of $|\TSg(E_d)|$
from that of $L(E_d,1)$. The computation of $L(E_d,1)$ is done via a
Waldspurger's formula which, roughly speaking, states that $L(E_d,1)$
is, up to a fudge factor, the square of the $|d|$-th coefficient of a
weight $3/2$ modular form given by an explicit linear combination of
theta series. It follows that, in this case, computations are possible
for very large families of quadratic twists (see for example
\cite{rubinstein}, \cite{quattrini}, etc.). Note that the numerical
data coming from these computations are in close agreement with the
well-known conjectures of \cite{CKRS} about extra-vanishing (coming
from the models of Random Matrix Theory), or on the behavior of the
Tate-Shafarevich groups $\TSg(E_d)$ of $E_d$ (see \cite{quattrini},
\cite{delaunay}).

\medskip 

In the rank 1 case, numerical investigation appears to be much more
complicated and, as far as we know, has never been done before. In
that case, we first have to compute the value of the derivative
$L'(E_d,1)$ for each curve $E_d$ in the family of odd quadratic
twists. However, there is no Waldspurger's formula to compute this
value directly, and furthermore from this value one can only deduce
(assuming it is non-zero and under the Birch and Swinnerton-Dyer
conjecture) the value of the product $R(E_d) \, |\TSg(E_d)|$ where
$R(E_d)$ is the regulator of $E_d$. Thus we also need to be able to
evaluate at least one of the two terms of this product.\footnote{For
  some families of elliptic curves $(F_j)_j$, there exists a generic
  point in the Mordell-Weil group $F_j(\Q)$, thus one can separate the
  terms in this product and a direct investigation is possible (see
  \cite{del-duq}).  However, such families for which we know in
  advance the regulator are very special and in particular are not
  quadratic families, although we must say that it is possible to get
  sometimes a generic point for some very specific and tiny sub-family
  of quadratic twists.} The only (known) efficient way to do
this is to write down a generator $G_d$ of $E_d(\Q)$ and to compute
$R(E_d) = \hat{h}(G_d)$ where $\hat{h}$ is the canonical
height\footnote{This equality fixes once and for all our choice of the
  canonical height.  Note that this height is {\em twice} the height
  in Silverman's book \cite{silverman} or in Krir's paper \cite{krir}
  so this explains the difference of a factor 2 between the formulae
  in this paper and theirs.} of $E_d$.

\medskip

The method we used in this paper is to first adapt the Heegner-point
construction to our situation in order to construct a generator $G_d$
and then replace the Waldspurger's formula by the formula of Gross and
Zagier. This allows us to compute directly the regulator $R(E_d)$ {\em
  and at the same time} the order of the Tate-Shafarevich group
$|\TSg(E_d)|$ (assuming the Birch and Swinnerton-Dyer conjecture).

\bigskip

\noindent\textbf{Hypothesis. From now on, we assume the truth of the
Birch and Swinnerton-Dyer conjecture.}

\bigskip 

We now give some notations. Fix an elliptic curve $E$ defined over
$\Q$ and let $N$ be its conductor. The $L$-function of $E$ is
$$
L(E,s)=\sum_{n\geq 1} a(n)n^{-s}\;,\;\; \Re(s)>3/2
$$
It is now a classical and deep result that $L(E,s)$ can be
analytically continued to the whole complex plane and satisfies a
functional equation:
$$
\Lambda(E,s):= \left(\frac{\sqrt{N}}{2\pi}\right)^s \Gamma(s)L(E,s)=w
\Lambda(E,2-s)
$$ 
where $w=\pm 1$ gives the parity of the order of vanishing of $L(E,s)$
at $s=1$. Let $d$ be a fundamental discriminant. We denote by $E_d$
the quadratic twist of $E$ by $d$. The curves $E$ and $E_d$ are
isomorphic over the quadratic field $\Q(\sqrt{d})$ but not over $\Q$.
We denote by $\psi_d$ ($\psi$ if $d$ is clear in the context) the
isomorphism between $E$ and $E_d$ defined in the following way. Assume
that the curves $E$ and $E_d$ are given by:
\begin{eqnarray*}
E \; &:& \; y^2=x^3+Ax^2+Bx+c \\
E_d \; &:& \; y^2=x^3+Adx+Bd^2x+Cd^3
\end{eqnarray*}
then $\psi_d$ is:
$$
\begin{array}{cccc}
\psi_d \; \; : & \; \; E & \stackrel{\sim}{\longrightarrow} & E_d \\
             & (x,y)   & \longmapsto                      & (dx,d^{3/2}y)    
\end{array}
$$
The non-trivial automorphism $x \mapsto \bar{x}$ of $\Q(\sqrt{d})$,
which is the restriction of the complex conjugation if $d < 0$, acts
by:
\begin{equation}\label{conj_de_psi}
\psi_d(\overline{P})=-\overline{\psi_d(P)}
\end{equation}

Whenever $d$ and $N$ are coprime (and this will always be the case in
our families), the conductor of $E_d$ is $Nd^2$ and we have:
$$
L(E_d,s)= \sum_{n\geq 1} a(n)\chi_d(n) n^{-s}
$$ 
where $\chi_d(.)=\left(\frac{d}{.}\right)$ is the quadratic character
associated to $d$. The sign of the
functional equation satisfied by $L(E_d,s)$ is 
$$
w(E_d)=w \cdot \chi_d(-N).
$$
In the odd rank case (i.e. $w(E_d)=-1$), we are interested in the
values at $s=1$ of the derivatives of the $L$-functions. We have:
$$
L'(E_d,1)=\frac{\Omega(E_d) \, c(E_d)}{|E_d(\Q)\mbox{{\scriptsize
      tors}}|^2} \, R(E_d) \, S(E_d) 
$$
where as usual $\Omega(E_d)$ is the real period, $R(E_d)$ is the
regulator and $c(E_d) =\prod_{p\mid Nd} c_p(E)$ is the product of the
local Tamagawa numbers. The Birch and Swinnerton-Dyer conjecture
predicts that $S(E_d)=|\TSg(E_d)|$ if $L'(E_d,1) \neq 0$ and
$S(E_d)=0$ otherwise.

\section{Families of quadratic twists}

For each prime $p$ dividing the conductor $N$ of $E$, we fix a sign
$w_p=\pm 1$ so that $\prod_{p\mid N} w_p=w$. We then define the set:
$$
{\mathcal F}=\Big\{d<0, \mbox{ fundamental discriminant with}
\left(\frac{d}{p}\right)=w_p \mbox{ for all } p\mid N\Big\}
$$
and we let:
$$
{\mathcal F}(T)=\Big\{ d \in {\mathcal F},\; |d|<T \Big\}
$$
Then, our family of quadratic twists is the set $\left(E_d\right)_{d
  \in {\mathcal F}}$ and, for all these curves $E_d$, we have
$w(E_d)=-1$ by the above assumption on the product of the $w_p$'s. It
will be convenient for us to partition the family ${\mathcal F}$ into two
subfamilies corresponding to the odd and even discriminant
cases. Therefore we define:
$$
{\mathcal F}_{\mbox{{\scriptsize odd}}}= \Big\{ d \in {\mathcal F},\;
d \mbox{ odd} \Big\} 
\quad\mbox{and}\quad
{\mathcal F}_{\mbox{{\scriptsize odd}}}(T)= \Big\{ d \in {\mathcal
  F(T)},\; d \mbox{ odd} \Big\} 
$$
Note that we will not need to consider the subfamilies corresponding
to the even discriminants.

For $d \in {\mathcal F}$ with $|d|$ large enough, it follows from
Proposition 2 of \cite{delaunay2} that, if we denote by $c_4$ the
usual invariant of $E$ (cf. \cite[\S 7.1]{cohen0}), we have:
\begin{equation}\label{SR}
  S(E_d) \, R(E_d) = \frac{\sqrt{|d|} \, L'(E_d,1)}
  {\delta_8(d,c_4) \, \Omega_{\mathcal F} \, \prod\limits_{p\mid d}
    c_p(E_d)}   
\end{equation}
where $\delta_8(d,c_4)=2$ if $8 \mid d$ and $2 \mid c_4$, and
$\delta_8(d,c_4)=1$ otherwise, and $\Omega_{\mathcal F}$ is some
positive number which does not depend on $d$. When $L'(E_d,1)$ is not
zero then $E_d(\Q)$ has rank 1 and the regulator $R(E_d)$ is equal to
the canonical height $\hat{h}(G_d)$ of a generator $G_d$ of $E_d$. So,
the problem of studying the behavior of $R(E_d)$ is roughly speaking
the same as the one of studying the complexity of rational solutions
of the associated Diophantine equations.

\subsection{On upper bounds for $h(G_d)$}

Lang's conjecture \cite[Conjecture 10.2]{silverman} predicts that for
a general elliptic curve $E$:
$$
R(E) \ll |\Delta_{\mbox{{\scriptsize min}}}(E)|^{1/2+\epsilon}
$$ 
where $\Delta_{\mbox{{\scriptsize min}}}(E)$ is the minimal
discriminant of $E$. In our family, we have
$\Delta_{\mbox{{\scriptsize min}}}(E_d)=d^6\Delta_{\mbox{{\scriptsize
      min}}}(E)$ hence, this yields:
$$
R(E_d) \ll |d|^{3+\epsilon}
$$
Of course, this upper bound is very far from what we really expect for our family.
Indeed, using equation~(\ref{SR}) and the fact that $S(E_d)$ and
$c_p(E_d)$ are positive integers (so greater or equal to 1), the
Lindel\"of hypothesis applied to $L'(E_d,1)$ gives the following
conditional upper bound:
$$
R(E_d) \ll_\epsilon |d|^{1/2+\epsilon}
$$

In some cases, this upper bound can be proved on average. Anticipating
on the results and notations of Section~\ref{algo}, we prove:
\begin{proposition}\label{apres} 
  Assume that $N$ is square-free, $L(E,1)\neq 0$ and $w_p=+1$ for all
  $p \mid N$. Then we have:
  \begin{equation}\label{upper}
    \frac{1}{|{\mathcal F}_{\ttt odd}(T)|} \sum_{{\begin{tabular}{c}\\[-6mm] $\scriptscriptstyle d\in{\mathcal F}_{\ttt odd}(T) $ \\ [-2mm]$\scriptscriptstyle L^\prime(E_d,1)\neq 0$ \end{tabular}}}\hspace*{-4mm} R(E_d) \ll T^{1/2}\log T
  \end{equation}
\end{proposition}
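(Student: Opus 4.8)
The plan is to start from the exact formula~(\ref{SR}) and bound $R(E_d)$ from above by discarding the arithmetic factors that are at least $1$. Since $S(E_d)$ is a positive integer and each $c_p(E_d) \geq 1$, equation~(\ref{SR}) gives
$$
R(E_d) \leq \frac{\sqrt{|d|}\, L'(E_d,1)}{\delta_8(d,c_4)\,\Omega_{\mathcal F}\,\prod_{p\mid d} c_p(E_d)} \leq \frac{1}{\Omega_{\mathcal F}} \sqrt{|d|}\, L'(E_d,1)
$$
whenever $L'(E_d,1)\neq 0$, with $\Omega_{\mathcal F}>0$ independent of $d$. So it suffices to show
$$
\sum_{\substack{d\in{\mathcal F}_{\ttt odd}(T)\\ L'(E_d,1)\neq 0}} \sqrt{|d|}\, L'(E_d,1) \ll |{\mathcal F}_{\ttt odd}(T)|\, T^{1/2}\log T,
$$
and since $|{\mathcal F}_{\ttt odd}(T)| \asymp T$, this reduces to proving the first-moment bound $\sum_{d\in{\mathcal F}_{\ttt odd}(T)} L'(E_d,1) \ll T^{3/2}\log T$ (dropping the positivity restriction on $L'$ only enlarges the sum, as all terms are nonnegative by the sign condition $w(E_d)=-1$).

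To estimate the first moment of $L'(E_d,1)$, I would use the approximate functional equation. Because $w(E_d)=-1$ for every $d$ in the family, the functional equation for $\Lambda(E_d,s)$ forces $L(E_d,1)=0$ and yields a clean formula of the shape
$$
L'(E_d,1) = 2\sum_{n\geq 1}\frac{a(n)\chi_d(n)}{n}\, G\!\left(\frac{2\pi n}{\sqrt{N}\,|d|}\right),
$$
where $G$ is a smooth rapidly-decaying cutoff (an incomplete-Gamma-type weight), so effectively $n$ ranges up to $\ll |d|^{1+\epsilon}$. Summing over $d\in{\mathcal F}_{\ttt odd}(T)$ and exchanging the order of summation, the inner sum becomes a character sum $\sum_{d} \chi_d(n)$ over fundamental discriminants $d<0$, odd, with $|d|<T$, and with prescribed values $\left(\tfrac{d}{p}\right)=w_p=+1$ at the primes $p\mid N$. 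Writing $\chi_d(n)=\left(\tfrac{d}{n}\right)$ via quadratic reciprocity, this is a sum of a real character in $d$ to modulus $\ll nN$; the diagonal term $n=\square$ contributes the main term of size $\ll T\cdot T^{1/2}$ coming from $n\ll T^{1+\epsilon}$ with a $\log$ from $\sum_{n=\square}a(n)/n$, while the off-diagonal $n\neq\square$ is bounded by Pólya–Vinogradov (or a large-sieve/character-sum average), which is $O(\sqrt{nN}\log(nN))$ per term and, weighted by $a(n)/n$ and the cutoff, contributes a lower-order error. Collecting the main term gives $\sum_{d} L'(E_d,1) \ll T^{3/2}\log T$, as required. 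The hypotheses $N$ square-free, $L(E,1)\neq0$, and $w_p=+1$ for all $p\mid N$ are exactly what make~(\ref{SR}) applicable with a uniform $\Omega_{\mathcal F}$ and guarantee the sign is $-1$ throughout the family, so that all the $L'$-values are nonnegative and can be summed without cancellation issues.

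The main obstacle is controlling the off-diagonal contribution uniformly in $n$ up to the cutoff $n\ll T^{1+\epsilon}$: a naive Pólya–Vinogradov bound of $\sqrt{nN}$ summed against $a(n)/n$ over $n\leq T^{1+\epsilon}$ is borderline, so one must either exploit the smoothness and decay of the weight $G$ to truncate $n$ at $\ll T\log T$ (which is what produces the single extra logarithm rather than a power), or invoke a mean-value/large-sieve estimate for quadratic character sums to gain on average over $d$. I expect the cleanest route is the latter: apply the large sieve for real characters to $\sum_{|d|<T} \big|\sum_{n} \tfrac{a(n)\chi_d(n)}{n} G(\cdots)\big|$, which handles the off-diagonal in one stroke and leaves only the square-full (essentially perfect-square) values of $n$ to be summed explicitly, yielding the stated bound $T^{1/2}\log T$ after dividing by $|{\mathcal F}_{\ttt odd}(T)| \asymp T$.
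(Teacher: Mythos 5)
Your reduction step is fine and is in fact parallel to what the paper does, but you have replaced the paper's citation by a sketch whose analytic heart is not actually established. The paper's own proof is short: since $R_d=\ell_d G_d$ modulo torsion with $\ell_d\in\Z$, one has $R(E_d)=\hat{h}(G_d)\leq\hat{h}(R_d)=4\hat{h}_E(P_d)$ for the Heegner point $P_d$, and then the average of $\hat{h}_E(P_d)$ over the family is $\ll T^{1/2}\log T$ by Corollaire 3.2 of \cite{ricotta-vidick}; via Gross--Zagier that cited result is precisely the first-moment estimate for $L'(E_d,1)$ that you set out to prove. Your alternative entry point through \eqref{SR}, discarding $S(E_d)\geq 1$, $\delta_8\geq 1$ and $c_p(E_d)\geq 1$, is legitimate under the paper's standing BSD hypothesis and leads to the same quantity, so the strategy is sound.

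The genuine gap is in the moment computation itself, and the target is tight enough that this matters. First, a bookkeeping slip: from $R(E_d)\ll\sqrt{|d|}\,L'(E_d,1)$ and $|{\mathcal F}_{\ttt odd}(T)|\asymp T$ you need $\sum_d\sqrt{|d|}\,L'(E_d,1)\ll T^{3/2}\log T$, i.e. essentially $\sum_d L'(E_d,1)\ll T\log T$; your stated reduction to ``$\sum_d L'(E_d,1)\ll T^{3/2}\log T$'' is weaker by a factor $T^{1/2}$ and would only yield an average bound of $T\log T$. Second, and more seriously, the diagonal ($n=\square$) already produces the full main term of size $T\log T$, so the off-diagonal must be $O(T\log T)$ as well, and neither of the two devices you propose achieves this as described: termwise P\'olya--Vinogradov (with the fundamental-discriminant sieve and $|a(n)|\leq d(n)\sqrt{n}$) gives $T$ times a higher power of $\log T$, and Cauchy--Schwarz combined with the quadratic large sieve gives $T^{1+\varepsilon}$; either way you only get $M_1(T)\ll T^{1/2}(\log T)^{c}$ with $c>1$ or $T^{1/2+\varepsilon}$, not the stated $T^{1/2}\log T$. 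You yourself flag this step as ``borderline'' and leave the choice of repair open. To close it one needs the standard but genuinely nontrivial first-moment analysis of $L'(E_d,1)$ in quadratic twist families with an error term below the main term (Poisson summation/Gauss sums in the $d$-variable, or the methods used by Iwaniec, Murty--Murty, Bump--Friedberg--Hoffstein), or else simply to quote such a result --- which is exactly what the paper does by invoking \cite{ricotta-vidick}.
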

\begin{proof}
  This is a direct corollary of a theorem of Ricotta and Vidick.
  Indeed, with the notations of section \ref{algo} we have
  $R(E_d)=\hat{h}(G_d)\leq \hat{h}(R_d)=4\hat{h}_E(P_d)$, where
  $\hat{h}_E$ is the canonical height on $E$ and $P_d \in E(\Q{\sqrt{d}})$
  is the Heegner point constructed in \ref{algo}. Now, we apply the
  corollaire 3.2 of \cite{ricotta-vidick}. 
\end{proof}

\begin{remark}
  Classical conjectures predict that the number of discriminants $d$
  in our family for which $L'(E_d,1)=0$ should have density $0$ (we
  will come back to this fact later), so $|{\mathcal F}_{\ttt
    odd}(T)|$ is roughly the number of terms in the sum of the formula
  above and hence the proposition really asserts that on average
  $R(E_d) \ll |d|^{1/2+\epsilon}$ for all $d\in {\mathcal F}_{\ttt
    odd}$.
\end{remark}

\subsection{On lower bound for $R(E_d)$}

Another conjecture of Lang asserts that $\hat{h}(G_d) \gg \log
|\Delta_{\mbox{{\scriptsize min}}}(E_d)|$, thus we get:
\begin{equation}\label{lang}
\hat{h}(G_d) \gg \log |d|
\end{equation}
In fact, we have the more precise result:
\begin{proposition}\label{explicite} 
  If $j(E)\neq 0$, $1728$, then there is an explicit constant $M$,
  depending on $E$ and on the $w_p$, such that we have for all $d\in
  {\mathcal F}$:
  $$
  \hat{h}(G_d) > \frac{1}{M} \log |d|
  $$
  If $w_p=+1$ for all $p\mid N$, then one can take $M=1296\,c(E)^2$. 
\end{proposition}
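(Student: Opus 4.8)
The plan is to move the whole estimate onto the \emph{fixed} curve $E$, working over the quadratic field $K=\Q(\sqrt d)$, where the relevant effective constants depend only on $E$ (and the degree $[K:\Q]=2$), not on $d$. Write $G_d=(u,v)$ with $u,v\in\Q$ and set $P_d:=\psi_d^{-1}(G_d)=\bigl(u/d,\,(v/d^2)\sqrt d\,\bigr)\in E(K)$, so $P_d=(a,b\sqrt d)$ with $a,b\in\Q$; by \eqref{conj_de_psi}, $\overline{P_d}=-P_d$, whence $P_d\notin E(\Q)$ and, $G_d$ being non-torsion, $P_d$ is non-torsion. Since $\psi_d$ is an isomorphism over $K$ and the canonical height is independent of the base field and invariant under isomorphisms, $\hat h(G_d)=\hat h_{E/K}(P_d)$; it suffices to bound $\hat h_{E/K}(P_d)$ from below.

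The arithmetic input is that $P_d$ is $\ell$-adically very close to being $2$-torsion for every prime $\ell\mid d$. Such an $\ell$ is a prime of good reduction for $E$ (as $\gcd(d,N)=1$); in the model $y^2=x^3+Ax^2+Bx+C$ of the introduction, which we may take $\ell$-integral when $\ell\ge5$ (the primes $2$ and $3$ being pushed into the final constant), the relation $b^2d=a^3+Aa^2+Ba+C$ forces one of two alternatives: either $a\notin\Z_\ell$, and then $P_d$ reduces to $O$ at the prime of $K$ above $\ell$; or $a\in\Z_\ell$, in which case also $b\in\Z_\ell$ and $P_d$ reduces to a point with vanishing $y$-coordinate, i.e. to a $2$-torsion point of $\widetilde E$. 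Hence, setting $n:=2\,c(E)$ — an even integer divisible by every Tamagawa number $c_p(E)$, $p\mid N$ — the point $Q_d:=nP_d$ reduces to $O$ at every prime of $K$ above a divisor of $d$, and lies on the identity component of the Néron model of $E$ at every $p\mid N$. This is exactly where the hypothesis $w_p=+1$ for all $p\mid N$ is used: it makes $K$ split at every $p\mid N$, so that $E_d\cong E$ over $\Q_p$ and the relevant local Tamagawa numbers are precisely the $c_p(E)$; without it one still gets \emph{some} explicit $M$, but depending on the Tamagawa numbers over the quadratic completions.

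Now decompose $\hat h_{E/K}(Q_d)=\sum_w\hat\lambda_w(Q_d)$ into canonical local heights. At each place $w$ above an $\ell\mid d$ the point $Q_d$ lies in the formal group, so $\hat\lambda_w(Q_d)$ is a positive explicit multiple of $\log\ell$. At every other finite place the term is non-negative: automatic at places of good reduction, and also at the places above each $p\mid N$ because $Q_d$ sits on the identity component (there the correction term in Tate's local-height formula has the favourable sign, both in the multiplicative case and, by passing to a field of good reduction, in the additive case). Finally the archimedean term is $\ge-\kappa_\infty(E)$, where $\kappa_\infty(E)$ is minus the minimum over $E(\C)$ of the archimedean canonical local height — a function continuous away from $O$, tending to $+\infty$ at $O$, hence with a finite minimum, and depending only on $E$. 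Summing,
$$
\hat h_{E/K}(Q_d)\ \ge\ c_0\sum_{\ell\mid d}\log\ell\ -\ \kappa_\infty(E)\ \ge\ c_0\log|d|\ -\ c_1(E)
$$
for explicit $c_0>0$, $c_1(E)\ge0$, and consequently $\hat h(G_d)=n^{-2}\hat h_{E/K}(Q_d)\ge(4\,c(E)^2)^{-1}\bigl(c_0\log|d|-c_1(E)\bigr)$.

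It remains to make all the constants explicit and to absorb the lower-order terms so that the stated inequality $\hat h(G_d)>(1296\,c(E)^2)^{-1}\log|d|$ holds \emph{uniformly} in $d$ (including the small discriminants): this requires explicit bounds for $\kappa_\infty(E)$ and for the local contributions at the primes $2$ and $3$ — whether or not they divide $d$ or are primes of bad reduction of $E$ — the factor $1296=6^4$ being precisely the trace of this separate treatment of the two small primes and of the normalisation conventions. This final bookkeeping, rather than any conceptual point, is the main obstacle; the hypothesis $j(E)\neq0,1728$ enters there, since the effective forms of the height-difference and archimedean local-height estimates one quotes are cleanest for curves without extra automorphisms.
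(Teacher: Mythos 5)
There is a genuine gap, and it sits exactly at the point you defer as ``final bookkeeping''. Your local-height decomposition (untwist to $E$ over $K=\Q(\sqrt d)$, note that $P_d$ is congruent to a $2$-torsion point or to $O$ at every prime above $\ell\mid d$, kill the component groups at $p\mid N$ by multiplying by $2c(E)$, and sum local heights) only yields a bound of the shape
$$
\hat{h}(G_d)\;\geq\;\frac{1}{4c(E)^2}\bigl(c_0\log|d|-c_1(E)\bigr),
$$
with $c_1(E)\geq 0$ coming from the archimedean place and the primes $2$, $3$. This is precisely the shape of the bound \eqref{useless_bound} (Silverman, exercise 8.17) that the paper's own remark singles out as useless: the additive constant is in general negative enough that the inequality says nothing for small $|d|$ --- and the proposition claims $\hat h(G_d)>\frac{1}{M}\log|d|$ for \emph{all} $d\in\mathcal F$, with no additive term. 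Converting a bound ``$c_0\log|d|-c_1(E)$'' into a clean ``$\frac{1}{M}\log|d|$'' valid uniformly in $d$ is not a matter of normalisations or of treating $2$ and $3$ separately; it is the substantive content of an effective form of Lang's conjecture, and your sketch contains no mechanism (e.g. an archimedean lower bound that scales with $\log|\Delta_{\min}(E_d)|$ rather than being an absolute constant, or a Hindry--Silverman-style argument over many multiples of the point) to achieve it. Your attribution of the hypothesis $j(E)\neq 0,1728$ to ``cleanness'' of quoted estimates is likewise off: it is a structural hypothesis in the effective result being invoked.

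The paper's proof is much shorter and routes around all of this: it works directly with $E_d$ over $\Q$, bounds $\lcm_p\, c_p(E_d)$ for $p\mid Nd$ (by $4\,c(E)$ when all $w_p=+1$, and in general by $4\prod_{w_p=+1}c_p(E)\prod_{w_p=-1}c_p(E^*)$), and then applies Corollaire 2.2 of \cite{krir} --- an effective, constant-free lower bound $\hat h(P)\gg \log|\Delta_{\min}|/\lcm(c_p)^2$ for $j\neq 0,1728$ --- together with $|\Delta_{\min}(E_d)|=|d|^6|\Delta_{\min}(E)|$. In other words, the uniform-in-$d$ statement you are missing is exactly what Krir's theorem supplies; to make your self-contained approach work you would have to reprove (a version of) that theorem, not merely tidy up constants.
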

\begin{proof}
  We estimate $\lcm(c_p(E_d))_{p \mid Nd}$ where $c_p(E_d)$ is the
  local Tamagawa number at the prime $p$ dividing $Nd$.  If $p \mid
  N$, then $c_p(E_d)$ is either $c_p(E)$ if $w_p=+1$, or $c_p(E^*)$ if
  $w_p=-1$ where $E^*$ is any fixed twist of $E$ by a discriminant
  that is not a square in $\Q_p$. If $p \mid d$, then $c_p(E_d)$ is
  either $1$, $2$ or $4$. Hence, we have
  $$
  \lcm(c_p(E_d))_{p \mid N}\leq 4 \prod_{p \mid N,\, w_p=+1} c_p(E)
  \prod_{p \mid N,\, w_p=-1} c_p(E^*)
  $$
  Now, the result follows using Corollaire 2.2 of \cite{krir} and the
  fact that $|\Delta_{\mbox{{\scriptsize min}}}(E_d)|=
  |d|^6|\Delta_{\mbox{{\scriptsize min}}}(E)|$. 
\end{proof}

\pagebreak[2]

\begin{remark}\ 
  \begin{enumerate}
  \item  With the same techniques, we can obtain similar results for
    $j(E)=0$ or~$1728$.
  \item One can prove (see for example \cite[exercise
    8.17]{silverman}) the following lower bound:
    \begin{equation}\label{useless_bound}
      \hat{h}(G_d) \geq \frac{1}{3} \log |d| + C
    \end{equation}
    where $C$ is some constant depending on $E$. The factor $1/3$ in
    this formula is much better than the factor $1/M$ in
    Proposition \ref{explicite}. However, the constant $C$ (which
    comes from the difference between the naive and the canonical
    heights) is negative and thus the estimate~\eqref{useless_bound}
    is useless for small $d$ (and in practice for all the $d$'s we are
    dealing with).  On the other hand, the estimate of
    Proposition~\ref{explicite} is good enough for our applications
    and has no consequence on the main
    complexity of our method.
  \item The lower bound in Proposition \ref{explicite} is optimal in
    the following sense: suppose that $E$ is given by the equation
    $y^2=P(x)$ where $P(x)$ is a degree $3$ polynomial. Then, one can
    easily check that the point $(rP(r),P(r)^2)$ belongs to
    $E_{P(r)}(\Q)$ and that the height of this point is $\approx 4/3
    \log|P(r)|$.
  \end{enumerate}
\end{remark}

One expect much better lower bounds on average: it is proved in
\cite{delaunay2} that predictions coming from Random Matrix Theory for
derivatives of $L$-functions (see \cite{snaith}) and Cohen-Lenstra
type heuristics for Tate-Shafarevich groups (see \cite{delaunay})
imply that for $k>0$:
\begin{equation}\label{lower}
  \frac{1}{|{\mathcal F}(T)|} \sum_{{\begin{tabular}{c}\\[-6mm]
        $\scriptscriptstyle d\in{\mathcal F}(T) $ \\
        [-2mm]$\scriptscriptstyle L^\prime(E_d,1)\neq 0$
      \end{tabular}}}\hspace*{-4mm} \hat{h}(G_d)^k \gg T^{k/2-\epsilon} 
\end{equation}
where the implied constant depends on $E$, $k$, $\epsilon$ and $w$.

\subsection{Heuristics for the moments of $R(E_d)$}

For $k>0$ we let:
$$
M_k(T)=\frac{1}{|{\mathcal F}(T)|} \sum_{{\begin{tabular}{c}\\[-6mm]
      $\scriptscriptstyle d\in{\mathcal F}(T) $ \\
      [-2mm]$\scriptscriptstyle L^\prime(E_d,1)\neq 0$
    \end{tabular}}}\hspace*{-4mm} R(E_d)^k
$$
Equations \eqref{upper} and \eqref{lower} imply that on average
$\hat{h}(G_d)$ should be of the size of $|d|^{1/2}$. In fact, one can make
similar computations as in \cite{delaunay2} to estimate:
$$
\sum_{{\begin{tabular}{c}\\[-6mm] $\scriptscriptstyle d\in{\mathcal F}(T)
      $ \\ [-2mm]$\scriptscriptstyle L^\prime(E_d,1)\neq 0$
    \end{tabular}}}\hspace*{-4mm} R(E_d)^k S(E_d)^k
$$
Then, Cohen-Lenstra type heuristics for Tate-Shafarevich groups (see
\cite{delaunay}) predict that $\dfrac{1}{|{\mathcal F}(T)|} S(E_d)^k$
tends to a finite limit as $T \rightarrow \infty$ whenever $0 < k
<1$. Therefore, using an empirical argument, we replace the term
$S(E_d)^k$ by a constant and deduce the following heuristics:

\begin{heuristic1}
  For $0<k<1$ we have as $T \rightarrow \infty$:
  \begin{equation}\label{heuristic}
    M_k(T) \sim A_k\; T^{k/2} \log(T)^{k(k+1)/2+a_k-1}
  \end{equation}
  for some constants $A_k$ and $a_k$.
\end{heuristic1}

\noindent The number $a_k$ comes from the contribution of the Tamagawa
numbers in the Birch and Swinnerton-Dyer conjecture.
More precisely we should have:
\begin{itemize}
\item If $E$ (or an isogenous curve) has full rational 2-torsion then
  $a_k=4^{-k}$.
\item If $E$ has exactly one rational 2-torsion point (and no
  isogenous curve has full 2-torsion) then
  $a_k=\frac{1}{2}(4^{-k}+2^{-k})$.
\end{itemize}
For the other cases, we need to make the rather technical assumption
that our restrictions on the discriminants are not 
incompatible with the use of the Chebotarev density theorem (see
\cite{delaunay2}). Then we should have:
\begin{itemize}
\item If $E$ has no rational 2-torsion point and its discriminant is
  not a square then $a_k=\frac{1}{6} \; 4^{-k} + \frac{1}{2} \; 2^{-k}
  + \frac{1}{3}$.
\item If $E$ has no rational 2-torsion point and its discriminant is a
  square then $a_k=\frac{1}{3} \; 4^{-k} + \frac{2}{3}$.
\end{itemize} 
Indeed, the equivalence \eqref{heuristic} depends only on the
isogenous class of the curve, and this explains why we have to
consider the curve in the class with the maximal rational 2-torsion point.

\smallskip

If we restrict our family to negative prime discriminants, the effect
of the Tamagawa numbers disappears and we have $a_k=1$. More precisely
if we let:
$$
{\mathcal F}'=\left\{d<0, \mbox{fund. disc. with}
  \left(\frac{d}{p}\right)=w_p \mbox{ for all } p\mid N  \mbox{ and } |d| \mbox{ is prime } \right\}  
$$
$$
{\mathcal F}'(T)=\Big\{ d \in {\mathcal F}',\; |d|<T \Big\}
$$
and
$$
M_k'(T)=\frac{1}{|{\mathcal F}'(T)|} \sum_{{\begin{tabular}{c}\\[-6mm]
      $\scriptscriptstyle d\in{\mathcal F}'(T) $ \\
      [-2mm]$\scriptscriptstyle L^\prime(E_d,1)\neq 0$
    \end{tabular}}}\hspace*{-4mm} R(E_d)^k \;\; ,
$$
we expect the following heuristic:

\begin{heuristic2}
  For $0<k<1$, we have as $T
  \rightarrow \infty$: 
  \begin{equation}\label{heuristic_prime}
    M_k'(T) \sim A'_k\; T^{k/2} \log(T)^{k(k+1)/2}
  \end{equation}
\end{heuristic2}

\begin{remark}
  These two heuristics are supported by our numerical data for the
  elliptic curves of conductor $N \leq 17$ as we will see in the last
  section.
\end{remark}

The asymptotics \eqref{heuristic} and \eqref{heuristic_prime} imply
that on average the regulators of $(E_d)_{d \in {\mathcal F}}$ behave
as $\approx |d|^{1/2 + \varepsilon}$ suggesting that
$\theta=\varepsilon$ in the Saturday Night Conjecture (see
\cite{CRSW}).  From this we get a density of $T^{1-\varepsilon}$ for
the subset of $d \in {\mathcal F}(T)$ such that $L'(E_d,1)=0$, which
is really surprising compared to the even-rank case. The numerical
data seems to support this fact. On the other hand, extensive
numerical computations by Watkins \cite{watkins} seem to indicate
otherwise. Indeed we want to emphasize that one has always to be
careful with deducing too strong of statements from numerical
investigations.

\section{Computation of generators}

We need to make a certain number of restrictions in order to be able
to apply the method described in this section. First, we assume that
$E$ is the strong Weil curve in its isogeny class (in fact, we just need 
that the Manin's constant of $E$ is equal to $1$) and that $j(E)\neq
0$, $1728$. These are just technical and not essential
assumptions. Furthermore, we assume $L(E,1) \neq 0$ which implies that
$E(\Q)$ has rank $0$ and that $w=+1$. This is a fundamental assumption
and the method would not work without it. Finally, the family of
discriminants ${\mathcal F}$ is obtained by taking $w_p=+1$ for all $p
\mid N$. Hence, $w(E_d)=-1$ and $d$ is a square modulo $4N$ for all
$d\in {\mathcal F}$.

The latter condition implies that one can apply the Heegner point
construction to get a point $P_d \in E(\Q(\sqrt{d}))$ of infinite
order if $L'(E_d,1)\neq 0$.\footnote{Classically the Heegner point
  method is used to construct directly a rational point on $E_d(\Q)$, see
  \cite[Chapter 8.5]{cohen}. However the direct construction of a
  point in a quadratic extension has been already done in connection
  with the problem of congruent numbers by N.~Elkies, see
  \cite{elkies}. The main difference with the construction used in
  this article is that Elkies just wanted a strategy to compute
  efficiently a rational point of some quadratic twists of the
  elliptic curve $32a2$, whereas we want to compute a {\em generator}
  of {\em all} the $E_d(\Q)$ for $d \in {\mathcal F}(T)$ of some
  large $T$. Hence, we really need to be careful in all the steps of
  the method in order to be as efficient as possible. We also have to
  use the full force of the Gross-Zagier  formula and of the Birch
  and Swinnerton-Dyer conjecture in order to get as much information
  as possible all throughout our computations.} For that one has to
evaluate the modular parametrization at well chosen points $\tau \in
X_0(N)$:
$$
\begin{array}{ccccc}
  \varphi \; :&\; X_0(N) & \stackrel{\phi}{\longrightarrow} & \C/\Lambda
  & \stackrel{\wp}{\longrightarrow} E(\C) \\ 
  &    \tau  & \longmapsto & {\scriptstyle \sum\limits_{n\geq1}
    \frac{a(n)}{n} e^{2i\pi n \tau}} 
\end{array}
$$  
with $X_0(N)=\Gamma_0(N) \backslash \overline{\Hy}$ where
$\Gamma_0(N)$ is the congruence subgroup of $SL_2(\Z)$ of matrices
with lower left entry divisible by $N$, $\overline{\Hy}=\Hy \cup \Q$
is the completed upper half plane, $\Lambda$ is the period lattice
associated to $E$ and $\wp$ is the analytic isomorphism given by the
Weierstrass function (and its derivative).

\subsection{Description of the method}\label{algo}

We now briefly describe the algorithm step by step.

\medskip

\noindent{\bf STEP 1.}\begin{it}
  For each ideal class ${\mathcal C}$ in the
  class group $\Cl(d)$ of $\Q(\sqrt{d})$, we choose an integral ideal
  ${\mathfrak a} \in {\mathcal C}$ such that:
  \begin{equation}\label{heegner_condition}
    {\mathfrak a}=A\; \Z + \frac{-B +\sqrt{d}}{2} \; \Z \;\; \mbox{ with }
    N \mid A \mbox{ and } B\equiv \beta \pmod{2N} 
  \end{equation}
  where $\beta=\beta_d$ is a fixed integer such that $\beta^2 \equiv d
  \pmod{4N}$. \\
  Then, to ${\mathcal C} = [{\mathfrak a}]$, we associate the Heegner
  point:  
  $$
  \tau_{[{\mathfrak a}]} =\frac{-B+\sqrt{d}}{2A}
  $$
\end{it}

\smallskip

\noindent{\sc Comments.} The point $\tau_{[{\mathfrak a}]}$ lies in
the upper half plane and is a well defined point in $X_0(N)$.
Nevertheless, in order to make the computations as easy as possible,
we need to choose ${\mathfrak a}$ such that $A$ is as small as
possible. Using classical algorithms (see \cite{cohen0}), we can
compute a set of ideals $\{{\mathfrak a}_i\}_i$ representing all the
classes of $\Cl(d)$:
$$
{\mathfrak a}_i=a_i\Z+\frac{-b_i+\sqrt{d}}{2}\Z
$$
with $0<a_i\ll|d|^{1/2}$ where the implied constant is explicit. We
can assume without loss of generality that the $a_i$'s are relatively
prime with $N$. Then, the ideals 
${\mathfrak a_i}\overline{{\mathfrak n}}$
satisfy~\eqref{heegner_condition} where  
$$
\overline{\mathfrak n}=N\;\Z+\frac{\beta - \sqrt{d}}{2} \Z
$$
From this it follows that one can choose the ideals 
${\mathfrak a}_i$'s in such a way that we have the following lower
bound: 
\begin{equation}\label{grand}
  \Im(\tau_{[{\mathfrak a}_i]}) \gg 1/N
\end{equation}
The complexity of this step is thus dominated by the class number of
$\Q(\sqrt{d})$, hence is at most $O(|d|^{1/2} \log |d|)$.

\medskip

\noindent{\bf STEP 2.}\begin{it} We compute
  $$
  z_d = \sum_{[{\mathfrak a}]}\phi(\tau_{[{\mathfrak a}]})
  $$ 
  where the sum is over the classes of $\Cl(d)$, and then a complex
  approximation of $P_d=\wp(z_d) \in E(\C)$.  The theory of complex
  multiplication and of Heegner points imply that $P_d\in
  E(\Q(\sqrt{d}))$. Using this approximation, we try to recognize the
  four rational numbers $r_1$, $s_1$, $r_2$ and $s_2$ such that
  $P_d=(r_1+s_1\sqrt{d},r_2+s_2\sqrt{d})$ and test if $P_d$ is a point
  of infinite order.
\end{it}

\smallskip

\noindent {\sc Comments.} This is the main step of the method. Note
that one can reduce the number of evaluations of $\phi$ by $2$ using
the following trick. Once we have already computed
$\varphi(\tau_{[{\mathfrak a}]})$, since $w=+1$ we can deduce from it
$\varphi(\tau_{[{\mathfrak a^{-1}n}]})$ using the formula:
\begin{equation}\label{phi_phibar}
  \overline{\varphi(\tau_{[{\mathfrak
        a}]})}=-\varphi(\tau_{[{\mathfrak a^{-1}n}]}) +Q 
\end{equation}
where $Q$ is an explicit rational torsion point in $E(\Q)$ depending
only on $E$.

Given a complex number $\tilde{x}_{P_d}$ that is an approximation of the
$x$-coordinate $x_{P_d}$ of the point $P_d$ computed as explained
above, we need to recover from it the two rational numbers $r_1$ and
$s_1$ such that $x_{P_d} = r_1 + s_1\sqrt{d}$. Note that for candidate
values $r_1$ and $s_1$, one can check if they are indeed correct by
trying to compute two rationals $r_2$ and $s_2$ such that
$(r_1+s_1\sqrt{d},r_2+s_2\sqrt{d}) \in E(\Q(\sqrt{d}))$. Let
$\tilde{r} = \Re(\tilde{x}_{P_d})$ and $\tilde{s} =
\Im(\tilde{x}_{P_d})/\sqrt{d}$. For $e \geq 1$ we look for a small
integral relation (using the LLL-algorithm) between the columns $C_1,
C_2, C_3$ of the matrix
$$
\left(
\begin{array}{ccc} 
  -10^e &   0    & \lfloor 10^e \, \tilde{r} \, \rceil \\
    0   & -10^e  & \lfloor 10^e \, \tilde{s} \, \rceil \\
    0   &   0    &   1
\end{array}
\right)
$$
where $\lfloor . \rceil$ denotes the closest integer. Indeed, for such
a relation, say
$$
\lambda_1 C_1 + \lambda_2 C_2 + \lambda_3 C_3
$$
of norm $M$, we have that $\lambda_1/\lambda_3$, resp.
$\lambda_2/\lambda_3$, is an approximation of $\tilde{r}$, resp.
$\tilde{s}$, with an error less than $\sqrt{M}/10^e$, and the
denominator $\lambda_3$ is smaller (in absolute value) than
$\sqrt{M}$. In order for this method to work, we need to compute
$\tilde{r}$ and $\tilde{s}$ at a suitably large enough precision and
to choose $e$ accordingly. More precisely, to recognize $x_{P_d}$ as
an element of $\Q(\sqrt{d})$ we need about $\hat{h}(P_d)$ digits.
Bounding the coefficients $a(n)/n$ by $1$ in the sum defining $\phi$
and using (\ref{grand}), we see that we need to sum approximatively
$\hat{h}(P_d)$ coefficients for $\phi$.  The Gross-Zagier theorem
\cite{gross-zagier} asserts that:\footnote{Actually, the Gross-Zagier
  theorem only applies for odd $d$'s. For even $d$'s the formula is a
  conjecture of Hayashi \cite{hayashi}.}
\begin{equation}\label{gross-zagier}
  \hat{h}(P_d)=\frac{L(E,1)\,L'(E_d,1)\,\sqrt{|d|}}{4\,\vol(E)}
\end{equation}
Applying the Lindel\"of hypothesis we deduce that $\hat{h}(P_d) \ll
|d|^{1/2+\varepsilon}$. Hence, the complexity of this step is $\ll
|d|^{1/2+\varepsilon} |\Cl(d)| \ll |d|^{1+\varepsilon}$. This step can
fail in two ways. First case: the computation has not been done to a
large enough precision. In that case we have to increase the precision
and start over. Second case: the point $P_d$ is a torsion point and in
that case $L'(E_d,1)=0$. If we suspect $P_d$ to be in fact a torsion
point, we can compute directly an approximation of $L'(E_d,1)$ and
prove that it is indeed zero using the following proposition (whose
proof we postpone to after the proof of the next proposition).

\begin{proposition}\label{lower_bound} 
  If 
  $$
  L'(E_d,1) \leq \frac{\vol(E)}{1296\, c(E)^2 \, L(E,1)} \; |d|^{-1/2}
  \log |d|
  $$ 
  then $L'(E_d,1)=0$.
\end{proposition}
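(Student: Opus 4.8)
The plan is to derive the proposition as the contrapositive of a lower bound for $L'(E_d,1)$ that holds as soon as this derivative is nonzero. The mechanism is to squeeze the regulator $R(E_d)$ between an upper bound coming from the Gross--Zagier formula and the explicit lower bound of Proposition~\ref{explicite}.

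First I would assume $L'(E_d,1)\neq 0$. Then, exactly as in the proof of Proposition~\ref{apres}, the Heegner point $P_d\in E(\Q(\sqrt d))$ has infinite order, $E_d(\Q)$ has rank $1$ (by the standing Birch and Swinnerton-Dyer hypothesis, or directly by Kolyvagin's theorem), so $R(E_d)=\hat h(G_d)$ for a generator $G_d$, and the rational point $R_d\in E_d(\Q)$ built from $P_d$ is non-torsion with $\hat h(R_d)=4\,\hat h_E(P_d)$. Being a nonzero integral multiple of $G_d$ modulo torsion, $R_d$ satisfies $R(E_d)=\hat h(G_d)\le\hat h(R_d)$. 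Feeding in the Gross--Zagier formula~\eqref{gross-zagier} (for even $d$ this is Hayashi's conjecture, in line with our running assumptions) then gives the upper bound
$$R(E_d)\ \le\ \hat h(R_d)\ =\ 4\,\hat h_E(P_d)\ =\ \frac{L(E,1)\,L'(E_d,1)\,\sqrt{|d|}}{\vol(E)}.$$

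Next I would bring in the lower bound. In the situation of this section the family $\mathcal F$ is defined with $w_p=+1$ for all $p\mid N$ and $j(E)\neq 0,1728$, so Proposition~\ref{explicite} applies with $M=1296\,c(E)^2$ and yields $R(E_d)=\hat h(G_d)>(\log|d|)/(1296\,c(E)^2)$. Comparing the two inequalities gives
$$\frac{\log|d|}{1296\,c(E)^2}\ <\ \frac{L(E,1)\,L'(E_d,1)\,\sqrt{|d|}}{\vol(E)},$$
that is, $L'(E_d,1)>\dfrac{\vol(E)}{1296\,c(E)^2\,L(E,1)}\,|d|^{-1/2}\log|d|$. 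The contrapositive of this implication is precisely the statement of the proposition.

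There is no genuinely difficult step here: the argument merely concatenates two results already in hand. The few points that require a little care are that the Heegner point is non-torsion and that the associated rational point $R_d$ is a nonzero multiple of $G_d$ (so that $\hat h(G_d)\le\hat h(R_d)$); both belong to the standard Heegner-point machinery underlying Proposition~\ref{apres}, and in fact the first follows from $\hat h(R_d)=4\hat h_E(P_d)>0$. Beyond that, the only real caveat is the restriction to odd $d$ inherent in the unconditional Gross--Zagier formula, which causes no trouble under our running hypotheses; the main ``work'' is simply propagating the constant $1296\,c(E)^2$ consistently through the two displayed inequalities.
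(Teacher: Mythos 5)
Your proposal is correct and follows essentially the same route as the paper, whose proof is precisely the one-line combination of the lower bound of Proposition~\ref{explicite} (applied to $\hat h(R_d)$, equivalently to $\hat h(G_d)\le\hat h(R_d)$ as you do) with the Gross--Zagier formula~\eqref{gross-zagier}, and then taking the contrapositive. The constant $\vol(E)/(1296\,c(E)^2\,L(E,1))$ comes out exactly as you compute it, so there is nothing to add.
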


\medskip

\noindent{\bf STEP 3.} \begin{it} If $P_d$ is a point of infinite
  order, i.e. {\bf STEP 2} has succeeded, then the point
  $R_d=\psi(P_d-\overline{P_d})$ is a point of infinite order in
  $E_d(\Q)$. We divide it in the Mordell-Weil group $E(\Q)$ until we
  get a generator $G_d \in E_d(\Q)$. We define the integer $\ell_d$ by
  $R_d = \ell_d \, G_d \pmod{E_d(\Q)_{\mbox{{\scriptsize tor}}}}$.
\end{it}

\smallskip

\noindent {\sc Comments.} The point $R_d$ is rational since
$R_d=\psi(P_d) + \overline{\psi(P_d)}$ by \eqref{conj_de_psi}. If
$L'(E_d,1) \neq 0 $ then we know that $G_d$ is a generator of
$E_d(\Q)$ modulo torsion.

\begin{proposition}
  $\hat{h}(R_d)=4\,\hat{h}_E(P_d)$, hence $R_d$ is non-torsion if and
  only if $P_d$ is non-torsion \textup{(}that is if and only if
  $L'(E_d,1)\neq 0$\textup{)}.
\end{proposition}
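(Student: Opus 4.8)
The plan is to deduce the identity from three standard properties of the canonical height — invariance under isomorphisms of elliptic curves, invariance under the Galois action, and the parallelogram law — combined with the sign relation \eqref{conj_de_psi} and the fundamental hypothesis that $E(\Q)$ has rank $0$. First I would rewrite $R_d$: applying \eqref{conj_de_psi} with $P=P_d$ gives $\psi(\overline{P_d})=-\overline{\psi(P_d)}$, so, setting $Q=\psi(P_d)\in E_d(\Q(\sqrt d))$,
\[
R_d=\psi(P_d-\overline{P_d})=\psi(P_d)-\psi(\overline{P_d})=\psi(P_d)+\overline{\psi(P_d)}=Q+\overline Q,
\]
and reading the same relation the other way round, $Q-\overline Q=\psi(P_d)+\psi(\overline{P_d})=\psi\bigl(P_d+\overline{P_d}\bigr)$.

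Next I would record the two functoriality facts, applied to the canonical height $\hat h$ on $E_d$ (extended to $\overline{\Q}$-points, with the normalization fixed in the introduction). Since $\psi$ is an isomorphism of elliptic curves over the number field $\Q(\sqrt d)$ — concretely $(x,y)\mapsto(dx,d^{3/2}y)$, so $x$-coordinates are multiplied by $d$ and the naive height of $x([2^n]\cdot)$ changes by $O(\log|d|)$ uniformly in $n$ — one has $\hat h(\psi(P))=\hat h_E(P)$ for all $P\in E(\overline{\Q})$; and $\hat h$ is invariant under $\gal(\overline{\Q}/\Q)$, so $\hat h(\overline Q)=\hat h(Q)$. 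The parallelogram law for $\hat h$ on $E_d$, applied to the pair $(Q,\overline Q)$, then gives
\[
\hat h(Q+\overline Q)+\hat h(Q-\overline Q)=2\hat h(Q)+2\hat h(\overline Q)=4\hat h(Q)=4\,\hat h_E(P_d).
\]

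It remains to check that $\hat h(Q-\overline Q)=0$. By the computation above $Q-\overline Q=\psi(P_d+\overline{P_d})$, and $P_d+\overline{P_d}$ is fixed by the non-trivial automorphism of $\Q(\sqrt d)/\Q$, hence lies in $E(\Q)$. Since $L(E,1)\neq 0$, the group $E(\Q)$ has rank $0$, so $P_d+\overline{P_d}$ is torsion, $\hat h_E(P_d+\overline{P_d})=0$, and therefore $\hat h(Q-\overline Q)=\hat h_E(P_d+\overline{P_d})=0$ by isomorphism-invariance. Substituting into the displayed identity yields $\hat h(R_d)=\hat h(Q+\overline Q)=4\,\hat h_E(P_d)$. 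The second assertion is then immediate: $\hat h$ vanishes exactly on torsion points, so $R_d$ is non-torsion iff $\hat h(R_d)\neq 0$ iff $\hat h_E(P_d)\neq 0$ iff $P_d$ is non-torsion, and by the Gross--Zagier formula \eqref{gross-zagier} together with $L(E,1)\neq 0$ this is equivalent to $L'(E_d,1)\neq 0$.

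The proof carries essentially no analytic content — it is purely formal once \eqref{conj_de_psi} is in hand — so there is no real ``hard part''; the one step one must not overlook is the appeal to the rank-$0$ hypothesis $L(E,1)\neq 0$, which is precisely what kills the anti-invariant contribution $\hat h(Q-\overline Q)$ and upgrades the parallelogram-law identity $\hat h(R_d)=4\hat h_E(P_d)-\hat h(Q-\overline Q)$ to the stated equality. A minor point worth spelling out is that $\hat h(\psi(P))=\hat h_E(P)$ holds for whatever single normalization of the canonical height one fixes on both $E$ and $E_d$, so the factor-of-two convention announced in the introduction creates no discrepancy.
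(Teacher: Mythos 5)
Your proof is correct, but it reaches the key fact by a different route than the paper. The paper's argument is shorter and more direct: since the height is model-independent, $\hat{h}(R_d)=\hat{h}_E(P_d-\overline{P_d})$, and then it invokes the relation \eqref{phi_phibar} (which encodes the Atkin--Lehner/sign condition $w=+1$ at the level of the modular parametrization) to conclude that $P_d=-\overline{P_d}$ plus a rational torsion point, so that $P_d-\overline{P_d}=2P_d$ up to torsion and the factor $4$ drops out immediately from quadraticity. You instead never touch \eqref{phi_phibar}: you write $R_d=Q+\overline{Q}$ with $Q=\psi(P_d)$ via \eqref{conj_de_psi}, apply the parallelogram law, and kill the anti-invariant term $\hat h(Q-\overline Q)=\hat h_E(P_d+\overline{P_d})$ by observing that the trace $P_d+\overline{P_d}$ lies in $E(\Q)$, which has rank $0$ under the standing hypothesis $L(E,1)\neq 0$. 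The two mechanisms are essentially equivalent --- the trace being torsion is the same statement as $P_d\equiv-\overline{P_d}$ modulo torsion --- but they are justified differently: the paper gets it from the behaviour of the Heegner points under complex conjugation (so the torsion point is even known to be rational and explicit), while your version only needs the rank-$0$ hypothesis and is in that sense more robust and self-contained, at the mild cost of invoking the parallelogram law where the paper just doubles the point. Your handling of the remaining points (invariance of $\hat h$ under the isomorphism $\psi$ and under Galois, the normalization remark, and the equivalence with $L'(E_d,1)\neq 0$ via \eqref{gross-zagier} and $L(E,1)\neq 0$) matches the paper's intent and is fine.
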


\begin{proof}
  The height does not depend on the model of the elliptic curve, hence
  $\hat{h}(R_d)=\hat{h}_E(P_d-\overline{P_d})$. Furthermore, equation
  \eqref{phi_phibar} implies that $P_d=-\overline{P_d}$ plus a
  rational torsion point.
\end{proof}
\begin{proof}[Proof of proposition \ref{lower_bound}.] We use the
  lower bound from proposition \ref{explicite} for $\hat{h}(R_d)$ and
  equation (\ref{gross-zagier}).
\end{proof}

From Proposition \ref{explicite} we know that:
\begin{equation}\label{borne_l}
  |\ell_d| < 36 \, c(E) \sqrt{\frac{\hat{h}(R_d)}{\log|d|}} \ll
  |d|^{1/4+\varepsilon} 
\end{equation}

Hence there are finitely many primes $p$ for which we need to check
$p$-di\-vi\-si\-bi\-li\-ty. Also, it is well-known that
$E_d(\Q)_{\mbox{{\scriptsize tors}}}$ does not depend upon $d$ (for
all $d$'s except at most one) and can only be $\simeq \{0\}$, $\Z/2\Z$
or $\Z/2\Z \times \Z/2\Z$. Therefore we need to be careful about
torsion only when we consider $2$-divisibility which can be tested
easily using $2$-division polynomial. For an odd prime $p$, we use the
following method to rule out $p$-divisibility. We find a prime $r$, of
good reduction, such that the order $\alpha$ of the group $E_d(\F_r)$
is divisible by $p$. Then if $(\alpha/p) R_d$ is not zero in
$E_d(\F_r)$, we know that $R_d$ is not divisible by $p$ in $E(\F_r)$,
and thus in $E_d(\Q)$ too. If after having performed a large number of
such tests, we have not been able to prove that $R_d$ is not divisible
by $p$, then we ``know'' that the point must be divisible by $p$ and
we perform the division.\footnote{Indeed, in all cases, either we
  could prove that the point is not divisible by $p$ by such a test,
  or we could actually divide it by $p$.}

\medskip

\noindent{\bf STEP 4.}\begin{it} We compute the regulator of $E_d$ {\rm
    (}in the rank 1 case{\rm )} which is equal to $\hat{h}(G_d)$ and
  the order of the Tate-Shafarevich group $|\TSg(E_d)|$.
\end{it}

\smallskip

\noindent {\sc Comments.} We can compute the order of $|\TSg(E_d)|$
using:
\begin{proposition}\label{formule_sha} Under the Birch and
  Swinnerton-Dyer conjecture\footnote{For even $d$'s, we need again to
    assume the conjecture of Hayashi \cite{hayashi}.}, the following
  equality holds   
  $$
  |\TSg(E_d)|=\frac{|E(\Q)_{\mbox{{\scriptsize tor}}}|^2 \,
    |E_d(\Q)_{\mbox{{\scriptsize tor}}}|^2}{|\TSg(E)|\,c(E)^2} \;
  \frac{\ell_d^2}{2sg(\Delta_{\mbox{{\scriptsize {\em min}}}}(E))\,
    \delta_8(d,c_4) \prod\limits_{p\mid d}c_p(E_d)}
  $$
  where $sg(x)=1$ if $x<0$ and $sg(x)=2$ otherwise. 
\end{proposition}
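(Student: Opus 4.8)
The plan is to play two formulas for $L'(E_d,1)$ off against each other. On one side, the Gross--Zagier formula \eqref{gross-zagier} together with the height computations of this section pins down $L'(E_d,1)$ in terms of $\ell_d$ and $R(E_d)$; on the other, the Birch and Swinnerton-Dyer formula for the rank-one curve $E_d$ pins down $L'(E_d,1)$ in terms of $R(E_d)$ and $|\TSg(E_d)|$. Equating the two makes the factor $R(E_d)$ disappear, leaving $|\TSg(E_d)|$ expressed through $\ell_d$, $L(E,1)$, and archimedean/Tamagawa data; a final substitution of the rank-zero BSD formula for $E$ (legitimate since $L(E,1)\neq 0$ forces $E$ to have rank $0$) removes $L(E,1)$.

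Concretely, the previous proposition gives $\hat{h}(R_d)=4\,\hat{h}_E(P_d)$, while $R_d=\ell_d G_d$ modulo torsion with $G_d$ a generator of $E_d(\Q)$ modulo torsion, so $R(E_d)=\hat{h}(G_d)$ and, the canonical height being a quadratic form that vanishes on torsion, $\hat{h}_E(P_d)=\tfrac14\,\ell_d^{\,2}\,R(E_d)$. Feeding this into \eqref{gross-zagier} (the factors $4$ cancel) gives
\[
L'(E_d,1)=\frac{\vol(E)\,\ell_d^{\,2}\,R(E_d)}{L(E,1)\,\sqrt{|d|}}.
\]
Comparing with the rank-one BSD formula recalled in Section~1, $L'(E_d,1)=\dfrac{\Omega(E_d)\,c(E_d)}{|E_d(\Q)_{\mathrm{tor}}|^2}\,R(E_d)\,|\TSg(E_d)|$ (we are in the case $L'(E_d,1)\neq0$), cancelling $R(E_d)$, and then inserting the rank-zero BSD formula $L(E,1)=\Omega(E)\,c(E)\,|\TSg(E)|/|E(\Q)_{\mathrm{tor}}|^2$, we obtain
\[
|\TSg(E_d)|=\frac{|E(\Q)_{\mathrm{tor}}|^2\,|E_d(\Q)_{\mathrm{tor}}|^2}{|\TSg(E)|\,c(E)}\cdot\frac{\vol(E)\,\ell_d^{\,2}}{\Omega(E)\,\Omega(E_d)\,\sqrt{|d|}\,c(E_d)}.
\]

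Two reductions then finish the proof. First, because $\mathcal F$ is built with $w_p=+1$ for every $p\mid N$, the defining conditions of $\mathcal F$ make $E_d$ isomorphic to $E$ over each $\Q_p$ with $p\mid N$ (as already used in the proof of Proposition~\ref{explicite}), so $c_p(E_d)=c_p(E)$ there and hence $c(E_d)=c(E)\prod_{p\mid d}c_p(E_d)$; this promotes the $c(E)$ in the denominator to $c(E)^2$. Second — and this is the only non-formal input — one needs the archimedean identity
\[
\Omega(E)\,\Omega(E_d)\,\sqrt{|d|}=2\,sg(\Delta_{\min}(E))\,\delta_8(d,c_4)\,\vol(E),
\]
equivalently the explicit value of the constant $\Omega_{\mathcal F}$ occurring in \eqref{SR}; substituting it produces exactly the stated formula. (One could instead start from \eqref{SR} in place of the BSD formula for $E_d$, but then identifying $\Omega_{\mathcal F}$ requires the same computation.)

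The main obstacle is therefore this period comparison. It amounts to tracking the Néron differential and the period lattice under the twisting map $\psi_d:(x,y)\mapsto(dx,d^{3/2}y)$: the $\sqrt{|d|}$ is the naive scaling of periods, $sg(\Delta_{\min}(E))$ records the number of connected components of $E(\R)$ (twisting by $d<0$ interchanges the real and the imaginary periods, up to component factors), and $\delta_8(d,c_4)$ corrects for the twisted Weierstrass model failing to be minimal at $2$ — which, exactly as in the definition of $\delta_8$, happens iff $8\mid d$ and $2\mid c_4$. This is carried out in Proposition~2 of \cite{delaunay2} (which is also what underlies \eqref{SR}), so in the write-up I would simply invoke it; everything else is the elementary algebra above, requiring care only about the factor-$2$ normalization of the canonical height fixed in the introduction.
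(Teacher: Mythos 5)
Your proof is correct and follows essentially the same route as the paper: combine the Gross--Zagier formula \eqref{gross-zagier} with $\hat{h}(R_d)=4\hat{h}_E(P_d)$ and $R_d=\ell_d G_d$ modulo torsion, substitute the Birch and Swinnerton-Dyer expressions for $L(E,1)$ and $L'(E_d,1)$, cancel the regulator, use $w_p=+1$ to get $c(E_d)=c(E)\prod_{p\mid d}c_p(E_d)$, and conclude with the comparison of periods. The only difference is presentational: the paper splits your archimedean identity $\Omega(E)\,\Omega(E_d)\,\sqrt{|d|}=2\,sg(\Delta_{\min}(E))\,\delta_8(d,c_4)\,\vol(E)$ into a period computation plus a separate factor $4$ when $\Delta_{\min}(E)>0$, which is equivalent bookkeeping and leads to the same final formula.
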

\begin{proof} Indeed, we have:
$$
\ell_d^2 \, \hat{h}(G_d) = 4\hat{h}(P_d) 
= \frac{L(E,1) \, L'(E_d,1) \, \sqrt{|d|}}{\vol(E)}
$$
Now we replace $L(E,1)$ and $L'(E_d,1)$ by the values predicted by the
Birch and Swinnerton-Dyer conjecture. After simplifying the regulator
$\hat{h}(G_d)$ on both sides, we get:
$$
\ell_d^2 = \frac{|\TSg(E_d)| \, |\TSg(E)| \, c(E)}
                {|E(\Q)_{\mbox{{\scriptsize tor}}}|^2 \, 
                 |E_d(\Q)_{\mbox{{\scriptsize tor}}}|^2} 
           \cdot c(E_d) \cdot
           \frac{\Omega(E) \, \Omega(E_d) \, \sqrt{|d|}}{\vol(E)} 
           (\times 4 \mbox{ if } \Delta_{\mbox{{\scriptsize {\em min}}}}(E) >0)
$$
Since for all $p\mid N$ we have $w_p=+1$, the curves $E$ and $E_d$ are
isomorphic over $\Q_p$ and thus $c_p(E_d)=c_p(E)$. So $c(E_d)=c(E) \,
\prod_{p \mid d} c_p(E_d)$. Finally a computation of the periods of
$E_d$ shows that:
$$
\frac{\Omega(E) \, \Omega(E_d) \, \sqrt{|d|}}{\vol(E)}
= \left\{
  \begin{array}{cc} 2\,\delta_8(d,c_4) & \mbox{ if }
    \Delta_{\mbox{{\scriptsize min}}}(E)<0 \\ \delta_8(d,c_4) & \mbox{
      if } \Delta_{\mbox{{\scriptsize min}}}(E)>0
  \end{array} \right.
$$
and the proposition follows. 
\end{proof}

\begin{remark} 
  The order of the Tate-Shafarevich group is a square, therefore the
  proposition implies that the following quantity must be a square: 
  $$
  2 \, sg(\Delta_{\mbox{{\scriptsize {\em min}}}}(E))\,
  \delta_8(d,c_4) \prod\limits_{p\mid d}c_p(E_d)
  $$ 
\end{remark}

From the above we see that for each individual $d$ the complexity for
computing $\hat{h}(G_d)$ and $|\TSg(E_d)|$ is at worst
$O(|d|^{1+\varepsilon})$. From these values we can deduce the value of
$L'(E_d,1)$ at arbitrary precision. Note that the direct computation
of $L'(E_d,1)$ by the rapidly converging series needs also $O(|d|)$
terms.\footnote{More generally, in order to compute $L'(E,1)$ for an
  elliptic curve $E$, one needs to sum the first $O(\sqrt{N})$ terms of
  the series, where $N$ is the conductor of $E$, and the constant in
  the ``$O$'' depends on the required accuracy.} Nevertheless, for
large precisions, in practice, it is often much more efficient to
compute $L'(E_d,1)$ as a by product of our computations than to
evaluate it directly.  This is probably due to the fact (see the
discussion on the computations) that the implied constant is small in
the prediction $M_1(T) =O(T^{1/2}(\log T)^a)$.
\subsection{An example}
We take $E=11a1 \; : \; y^2 + y = x^3-x^2-10x-20$ and $d=-79$ so that
the curve $E_d$ has minimal equation:
$$
E_d \; : \; y^2 + y = x^3 + x^2 -64490 x + 11396008
$$
We take $\beta=3$ so that $\beta^2 \equiv -79 \pmod{44}$. The class
group $\Cl(-79)$ of $\Q(\sqrt{-79})$ is cyclic of order 5, and the ideals:
$$\begin{array}{ll}
{\mathfrak a} = 11\Z + \dfrac{-3 + \sqrt{-79}}{2} \Z  \; , &
{\mathfrak b} = 22\Z + \dfrac{-3 + \sqrt{-79}}{2} \Z  \; , \\[5pt]
{\mathfrak c} = 44\Z + \dfrac{-3 + \sqrt{-79}}{2} \Z  \; ,&
{\mathfrak a}^{-1}{\mathfrak n} \quad\text{and}\quad {\mathfrak
  b}^{-1}{\mathfrak n} 
\end{array}$$
where 
$$
\mathfrak{n} = N \Z + \frac{\beta + \sqrt{d}}{2}\Z = 11\Z +
\frac{3 + \sqrt{-79}}{2}\Z 
$$
form a complete set of representatives of the ideal class group. We
compute  
$$
z = 2\Re\big(\phi(\tau_{[{\mathfrak a}]})+\phi(\tau_{[{\mathfrak
    b}]})\big) + \phi(\tau_{[{\mathfrak c}]}) \in \C/\Lambda 
$$
and we find
$$
P_d=\wp(z) \approx (-3.59000\cdots +
0.22000\cdots\sqrt{-79},5.17600\cdots +0.61600\cdots \sqrt{-79})
$$ 
so we easily recognize 
$$
P_d=\left(\frac{-179 +11\sqrt{-79}}{50},
  \frac{647+77\sqrt{-79}}{125}\right) \in E(\Q(\sqrt{-79}))
$$
From this, we get the point $R_d=\psi(P_d)+\overline{\psi(P_d)}
=(47,2910) \in E_d(\Q)$. And Formula~\eqref{borne_l} says that
$|\ell_d| \leq 293$ where $G_d = \ell_d R_d$. We find that the point
$R_d$ is divisible by $2$, more precisely $R_d=-2(26,3120)$, so that
$(26,3120) = \ell'_d G_d$ with $|\ell'_d| \leq 73$. We then easily
check that the point $(26,3120)$ is not divisible by any prime $\leq
73$ in the group $E_d(\Q)$, hence one can take $G_d=(26,3120)$ and
$|\ell_d|=2$. Proposition~\eqref{formule_sha} gives:
$$
|\TSg(E_d)|= 1
$$

\section{Discussion and numerical data}

We have computed, using the method described in the previous section,
the regulators and the order of the Tate-Shafarevich groups of the
twists $E_d$ of $E$ of the four elliptic curves $11a1$, $14a1$, $15a1$
and $17a1$, and for all available discriminants $d \in {\mathcal
  F}(1.5\times 10^6)$ with $w_p=+1$ for all $p \mid N$. We discuss in
this section the data we obtained and compare it with the
heuristics. All the computations have been performed using the PARI/GP
system \cite{pari} and the data is available at
\begin{center}
\texttt{http://math.univ-lyon1.fr/\~{ }roblot/tables.html} 
\end{center}

\medskip

We begin with the curves of prime conductor ($11a1$ and $17a1$) since
for the last two curves ($14a1$ and $15a1$), the congruence conditions
are more restrictive and therefore the number of discriminants in
${\mathcal F}(1.5\times 10^6)$ is quite small compared to $1.5\times
10^6$.


\subsection{The curve 11a1}
The curve $E$ is defined by $ E \; : \; y^2 + y = x^3-x^2-10x-20$. It
has conductor $N = 11$ and rank $0$ over $\Q$. We have $w_{11}=+1$.

\subsubsection{Numerical results for all discriminants}

\begin{figure}[htbp]
  \begin{small}
    \centering
    \hspace*{-1.5cm}\mbox{
      \subfigure{
        \includegraphics[width=8cm]{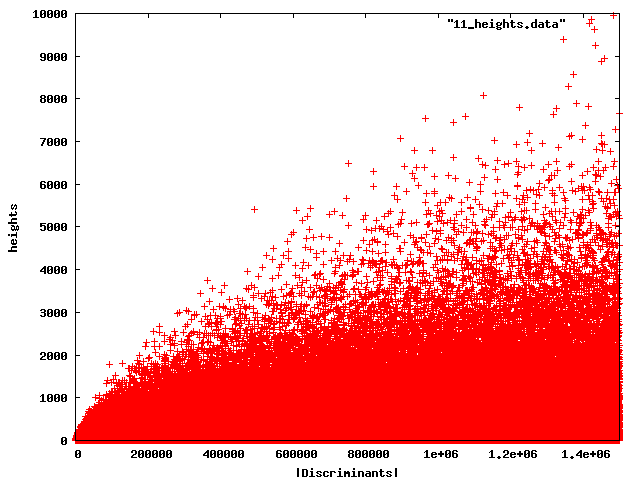}
      }
      \subfigure{
        \includegraphics[width=8cm]{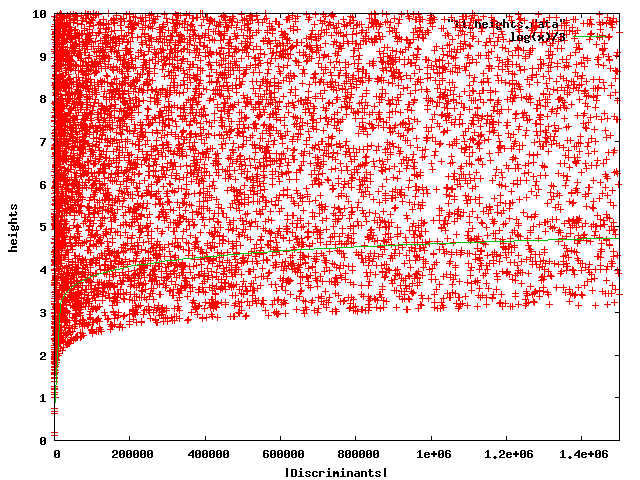}
      }
    }
    \vskip-10pt\caption{\label{11_heights} Regulators of the twists of 
      $11a1$. On the left, all the regulators are plotted, and on the
      right, only regulators less than $10$.  The gap between the
      $x$-axis and the minimal heights is clearly visible on the
      right.}
    \vspace*{.5cm}
    \hspace*{-1.5cm}\mbox{
      \subfigure{
        \includegraphics[width=8cm]{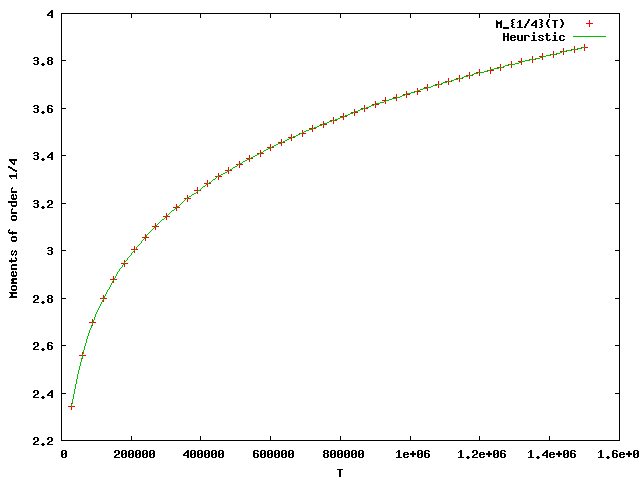}
      }
      \subfigure{
        \includegraphics[width=8cm]{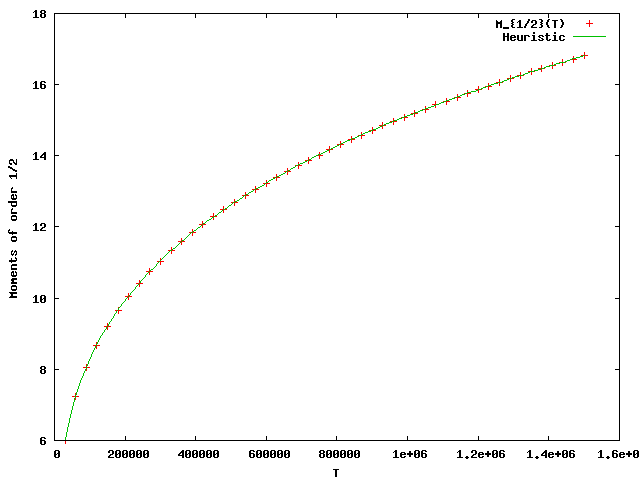}
      }
    }
    \hspace*{-1.5cm}\mbox{
      \subfigure{
        \includegraphics[width=8cm]{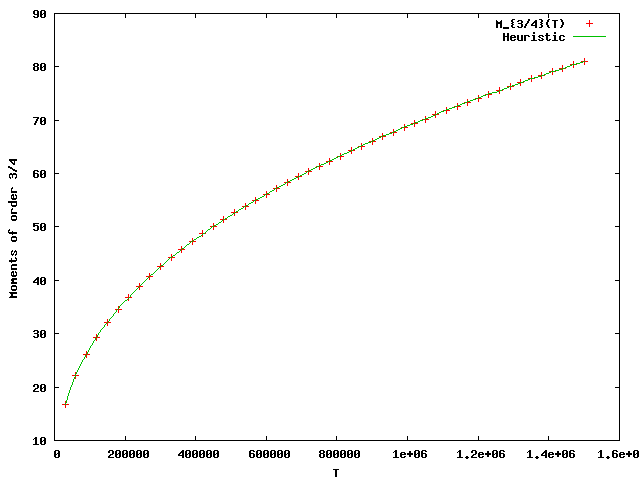}
      }
      \subfigure{
        \includegraphics[width=8cm]{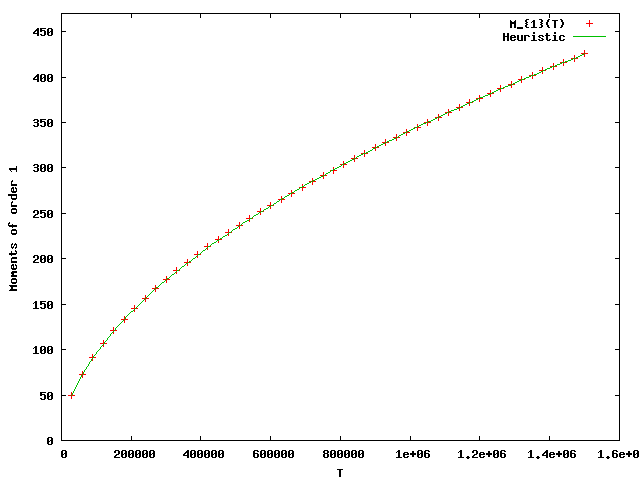}
      }
    }
    \vskip-10pt\caption{\label{11_moment} Moments of order
      $\frac{1}{4}$, $\frac{1}{2}$, $\frac{3}{4}$ and $1$ of the
      regulators of the twists of $11a1$ and the function given by the
      heuristics.} 
  \end{small}
\end{figure}

\begin{figure}[htpb]
  \begin{small}
    \hspace*{-1.5cm}\begin{minipage}[t]{8cm}
      \includegraphics[width=8cm]{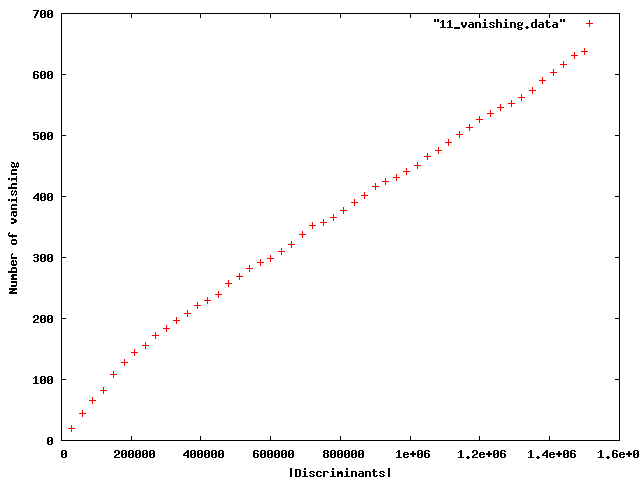}
      \vskip-10pt\caption{\label{11_extra_vanishing}Number of
        extra-vanishing of $L'(E_d,1)$ for $E=11a1$.} 
    \end{minipage}
    \hskip.3cm
    \begin{minipage}[t]{8cm}
      \includegraphics[width=8cm]{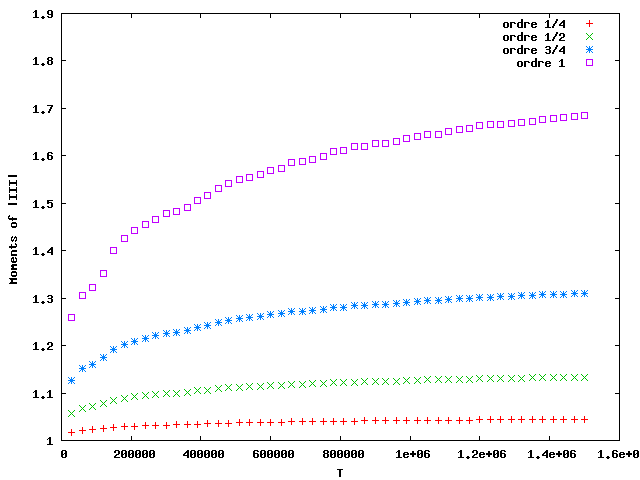}
      \vskip-10pt\caption{\label{11_sha} Moments of order
        $\frac{1}{4}$, $\frac{1}{2}$, $\frac{3}{4}$ and $1$ for the
        order of the Tate-Shafarevich groups of the twists of
        $11a1$. The heuristics suggest that the moments of order $k<1$
        tend to a constant (depending on $k$) whereas the moment of
        order $1$ should tend to infinity.} 
    \end{minipage}  
    \vskip1cm
    \hspace*{-1.5cm}\mbox{
      \subfigure{
        \includegraphics[width=8cm]{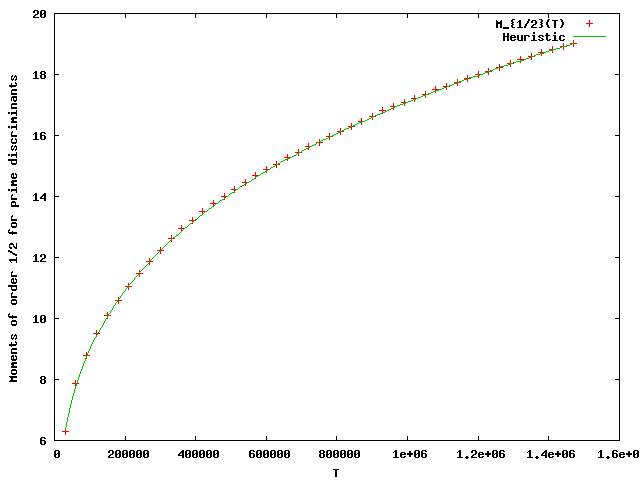}
      }
      \subfigure{
        \includegraphics[width=8cm]{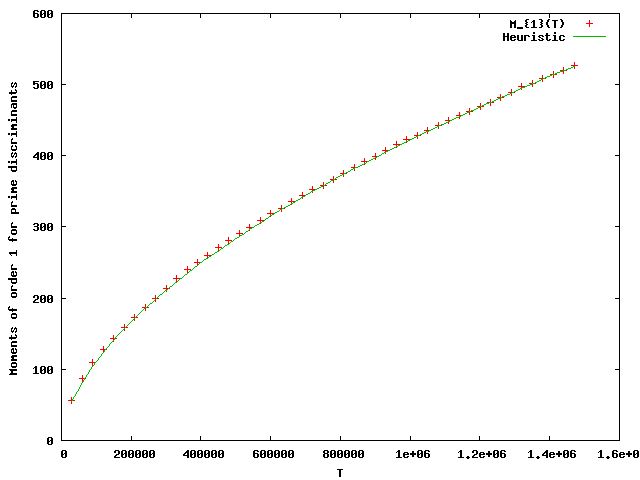}
      }
    }
    \vskip-10pt\caption{\label{11_premier} Moments of order
      $\frac{1}{2}$ and $1$ for the regulators of the twists of $11a1$
      by prime discriminants and the functions given by the
      heuristics.}  
  \end{small}
\end{figure}

\begin{itemize}
\item Number of discriminants: $|{\mathcal F}(1.5\times 10^6)| =
  208977$.
\item Largest regulator: $\approx 9945$ (for $d=-1482139$).
\item Number of extra-vanishing: 638.
\end{itemize}

We have $E(\Q)_{{\rm tors}} \simeq \Z/5\Z$ and there is no curve in
its isogeny class having rational 2-torsion.  Hence the heuristics
predict that:
$$
M_k(T) \sim A_k T^{k/2} \log(T)^{\frac{k(k+1)}{2}+\frac{1}{6 \cdot
    4^k} +\frac{1}{2 \cdot 2^k} -\frac{2}{3}}
$$
for some constant $A_k$. We computed $A_k$ numerically to fit the data
(values found: $A_{1/4} \approx 0.60$, $A_{1/2} \approx 0.33$,
$A_{3/4} \approx 0.16$, $A_{1} \approx 0.07$) and we plot the graph of
the function given by the heuristics and the points $(T,M_k(T))$ for
$T=1,2,\dots,150 \times 3\cdot 10^4$ and for $k=1/4$, $1/2$, $3/4$ and
$1$. As it can been seen the graphs (see Figure~\ref{11_moment}) are
in close agreement.

\subsubsection{Numerical results for prime discriminants}

\begin{itemize}
\item Number of prime discriminants:  28535.
\item Largest regulator: $\approx 9250$ (for $d=-1433539$).
\item Number of extra-vanishing: 0.\footnote{There is no
    extra-vanishing in this case using the results of
    \cite{antoniadis-frey}.}
\end{itemize}

The heuristics for prime discriminants predict that:
$$
M'_k(T) \sim A'_k T^{k/2} \log(T)^{k(k+1)/2}
$$
for some constant $A'_k$. We computed $A'_k$ numerically to fit the
data (values found: $A'_{1/2} \approx 0.20$, $A'_{1} \approx 0.03$)
and we plot the graph of the function given by the heuristics and the
points $(T,M'_k(T))$ for $T=1,2,\dots 150 \times 3 \cdot 10^4$, and
$k=1/2$, $1$ (see Figure~\ref{11_premier}).

\subsection{The curve 17a1}

The curve $E$ is defined by $E \; : \; y^2+xy+y=x^3-x^2-x-14$. It has
conductor 17 and rank $0$ over $\Q$. We have $w_{17}=+1$.

\subsubsection{Numerical results for all discriminants}

\begin{itemize}
\item Number of discriminants: 215305.
\item Largest regulator: $\approx 31746$ (for $d=-1257787$).
\item Number of extra-vanishing: 1140.
\end{itemize}

\begin{figure}[htbp]
  \begin{small}
    \hspace*{-1.5cm}\mbox{
      \subfigure{
        \includegraphics[width=6cm]{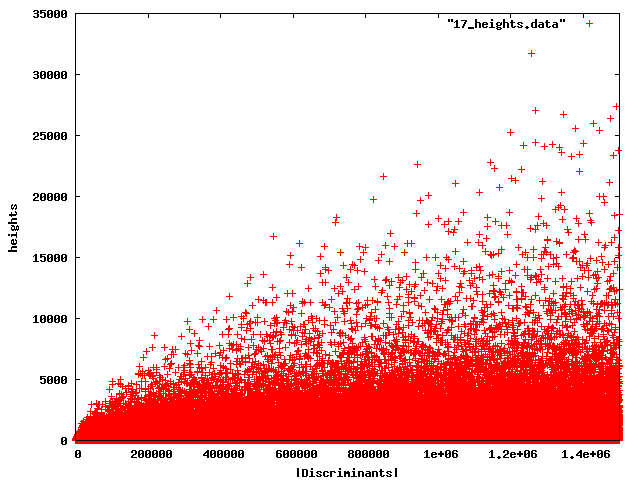}
      }
      \subfigure{
        \includegraphics[width=6cm]{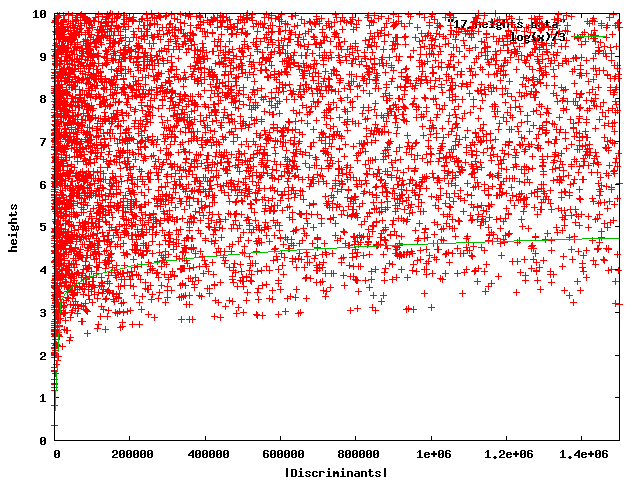}
      }
    }
    \vskip-10pt\caption{\label{17_heights} Regulators of the twists of
      $17a1$. On the left, all regulators are plotted, and on the
      right only regulators less than 10. The gap between the $x$-axis
      and the minimal heights is cleary visible on the right.}
    \vspace*{.5cm}
    \hspace*{-1.5cm}\mbox{
      \subfigure{
        \includegraphics[width=6cm]{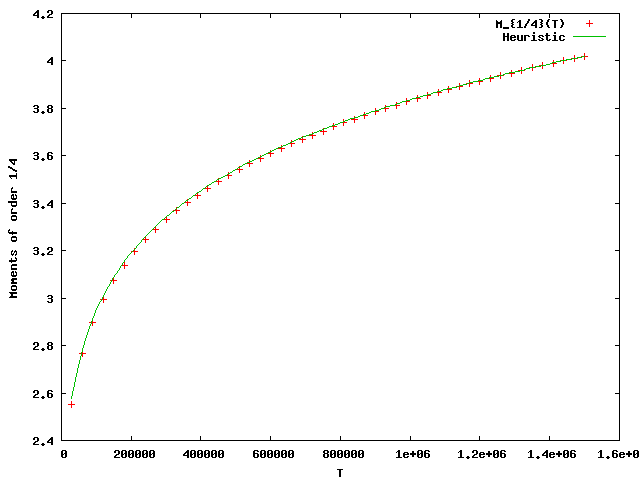}
      }
      \subfigure{
        \includegraphics[width=6cm]{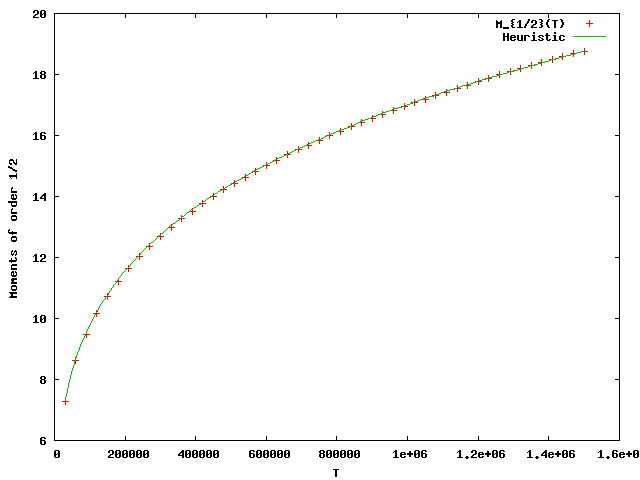}
      }
    }
    \hspace*{-1.5cm}\mbox{
      \subfigure{
        \includegraphics[width=6cm]{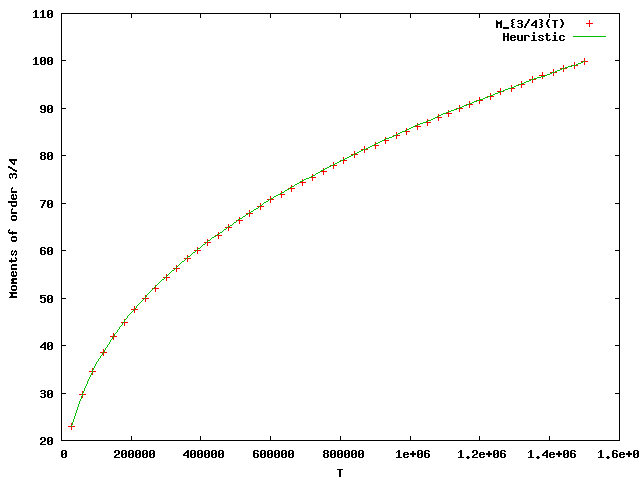}
      }
      \subfigure{
        \includegraphics[width=6cm]{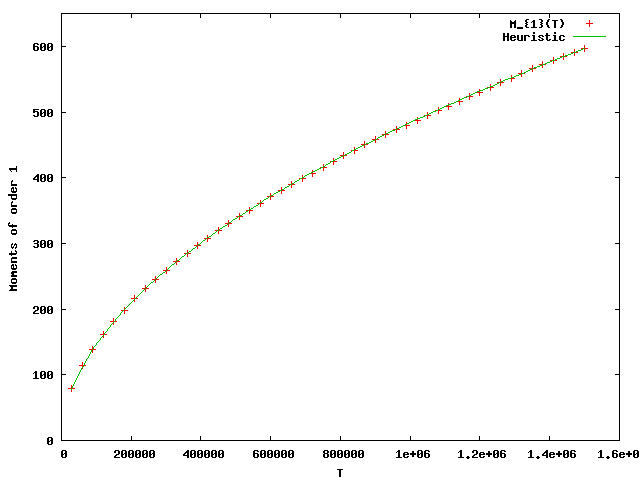}
      }
    }
    \vskip-10pt\caption{\label{17_moment} Moments of order $\frac{1}{4}$,
      $\frac{1}{2}$, $\frac{3}{4}$ and $1$ of the regulators of the
      twists of $17a1$ and the functions given by the heuristics.}
  \end{small}
\end{figure}

\begin{figure}[htpb]
  \begin{small}
    \hspace*{-1.5cm}\begin{minipage}[t]{8cm}
      \includegraphics[width=6cm]{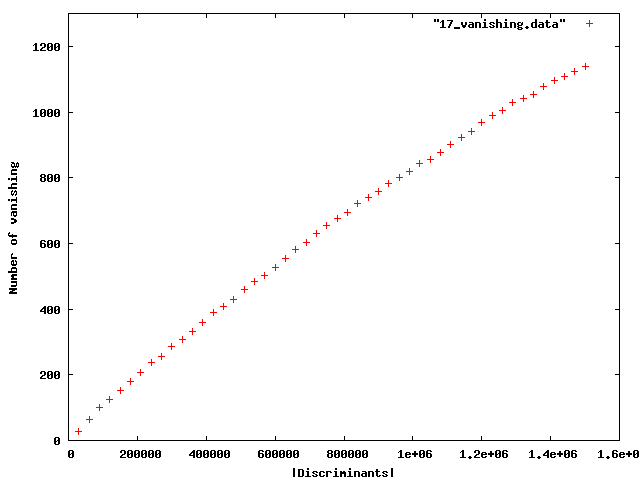}
      \vskip-10pt\caption{\label{17_extra_vanishing}Number of
        extra-vanishing of $L'(E_d,1)$ for $E=17a1$.} 
    \end{minipage}
    \hskip.3cm
    \begin{minipage}[t]{8cm}
      \includegraphics[width=6cm]{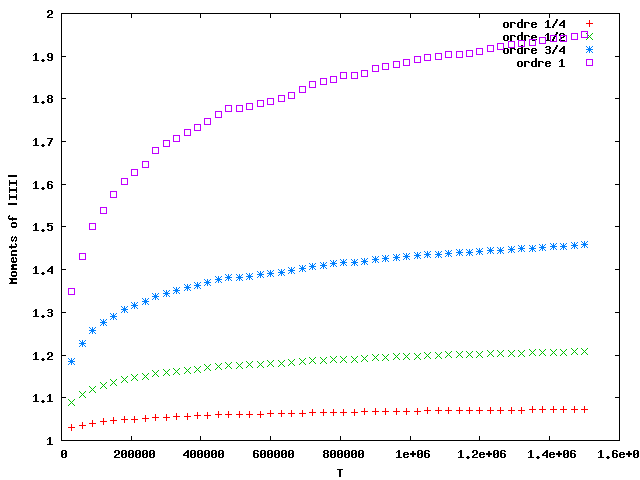}
      \vskip-10pt\caption{\label{17_sha} Moments of order
        $\frac{1}{4}$, $\frac{1}{2}$, $\frac{3}{4}$ and $1$ for the
        order of the Tate-Shafarevich groups of the twists of
        $17a1$. The heuristics suggest that the moments of order $k<1$
        tend to a constant (depending on $k$) whereas the moment of
        order $1$ should tend to infinity.} 
    \end{minipage}  
    \vskip1cm
    \hspace*{-1.5cm}\mbox{
      \subfigure{
        \includegraphics[width=6cm]{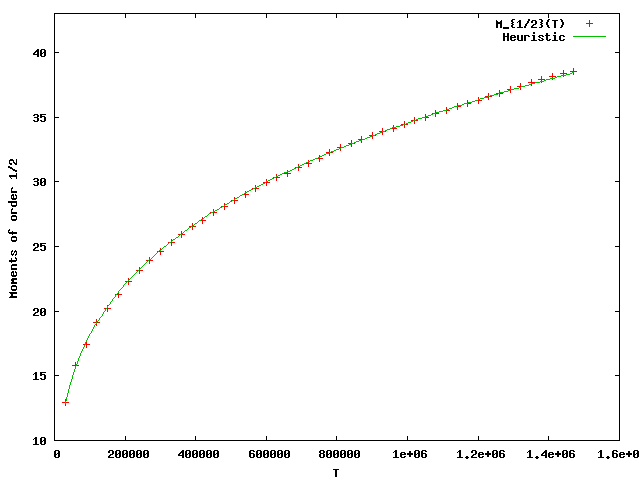}
      }
      \subfigure{
        \includegraphics[width=6cm]{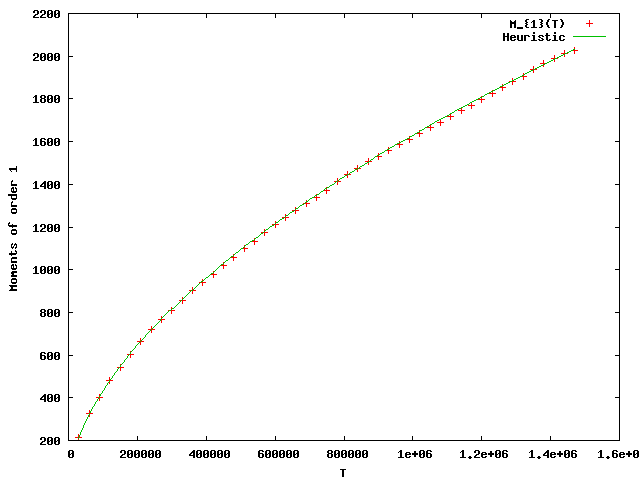}
      }
    }
    \vskip-10pt\caption{\label{17_premier} Moments of order
      $\frac{1}{2}$ and $1$ for the regulators of the twists of $17a1$
      by prime discriminants and the functions given by the
      heuristics.} 
  \end{small}
\end{figure}

\begin{remark}
  Note that the graphs of extra-vanishing for the curves 11a1
  (Figure~\ref{11_extra_vanishing}) and 17a1
  (Figure~\ref{17_extra_vanishing}) suggest that the density of
  extra-vanishing is larger for the twists of 17a1 than for those of
  11a1. However the asymptotic for the moments of the regulators is
  smaller (as $T\rightarrow \infty$) for 17a1 than for 11a1 which suggest that
  there are more constraints on the regulators of the twists of 11a1
  and thus imply in turn that we should have more extra-vanishing for
  this family. In fact, the constants $A_k$ in the asymptotics of
  $M_k(T)$ are larger for the curve 17a1, but asymptotics of the
  functions $M_k(T)$ for the curve  11a1 are larger than for the
  curve 17a1 for very large values of $T$ that are completely out of
  reach for computations. Therefore our guess is that the density of
  extra-vanishing for the twists of 11a1 will become greater than that
  for the twists of 17a1 for those very large values.
\end{remark}

The curve $17a2$ has full rational 2-torsion, hence the heuristics
predict that
$$
M_k(T) \sim A_k T^{k/2} \log(T)^{\frac{k(k+1)}{2}+\frac{1}{4^k}-1}
$$
for some constant $A_k$. We computed $A_k$ numerically to fit the data
(values found: $A_{1/4} \approx 0.97$, $A_{1/2} \approx 0.75$,
$A_{3/4} \approx 0.47$, $A_{1} \approx 0.25$ and we plot the graph of
the function given by the heuristics and the points $(T,M_k(T))$ for
$T=1,2,\dots 150 \times 3 \cdot 10^4$, and $k=1/4$, $1/2$, $3/4$ and
$1$ (see Figure~\ref{17_moment}).

\subsubsection{Numerical results for prime discriminants}

\begin{itemize}
\item Number of prime discriminants: 28601.
\item Largest regulator: $\approx 31745$ (for $d=-1257787$).
\item Number of extra-vanishing: 0.\footnote{There is no
    extra-vanishing in this case using the results of
    \cite{antoniadis-frey}.}
\end{itemize}

The heuristics for prime discriminants predicts that: 
$$
M'_k(T) \sim A'_k T^{k/2} \log(T)^{k(k+1)/2}
$$
for some constant $A'_k$. We computed $A'_k$ numerically to fit the
data (values found: $A'_{1/2} \approx 0.41$, $A'_{1} \approx 0.12$)
and we plot the graph of the function given by the heuristic and the
points $(T,M'_k(T))$ for $T=1,2,\dots 150 \times 3 \cdot 10^4$,
and $k=1/2$, $1$ (see Figure~\ref{17_premier}).

\subsection{The curve 14a1}

The curve $E$ is defined by $E \; : \; y^2+xy+y=x^3+4x-6$. It has
conductor $N = 14$ and rank $0$ over $\Q$. We have $w_{2} = w_{7} =
+1$.

\begin{itemize}
\item Number of discriminants:  66516.
\item Largest regulator: $\approx 16937$ (for $d=-1416631$).
\item Number of extra-vanishing: 262. 
\end{itemize}

\begin{figure}[hbtp]
  \begin{small}
    \centering
    \hspace*{-1.5cm}\mbox{
      \begin{minipage}[t]{8cm}
        \includegraphics[width=6cm]{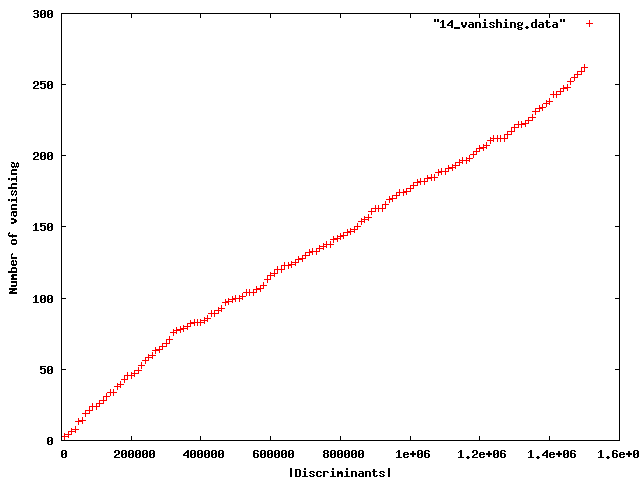}
        \vskip-10pt\caption{\label{14_extra_vanishing}Number of
          extra-vanishing of $L'(E_d,1)$ for $E=14a1$.} 
      \end{minipage}
      \hskip.3cm
      \begin{minipage}[t]{8cm}
        \includegraphics[width=6cm]{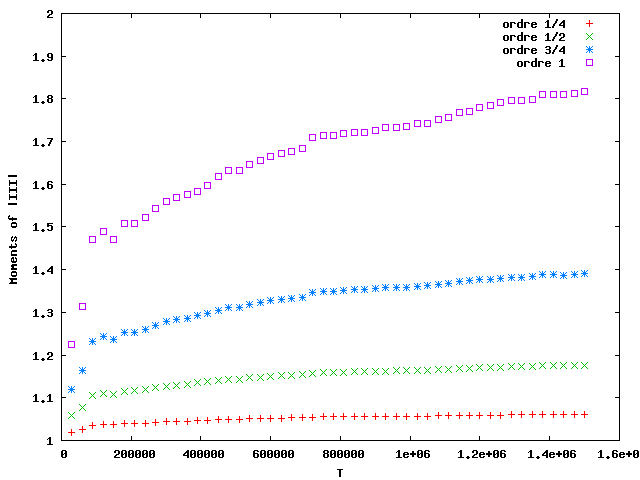}
        \vskip-10pt\caption{\label{14_sha} Moments of order
          $\frac{1}{4}$, $\frac{1}{2}$, $\frac{3}{4}$ and $1$ of the
          order of the Tate-Shafarevich groups of the twists of
          $14a1$. The heuristics suggest that the moments of order
          $k<1$ tend to a constant (depending on $k$) whereas the
          moment of order $1$ should tend to infinity.}
      \end{minipage}  
    }
  \end{small} 
\end{figure}

\begin{figure}[htbp]
  \begin{small}
    \centering
    \hspace*{-1.5cm}\mbox{
      \subfigure{
        \includegraphics[width=6cm]{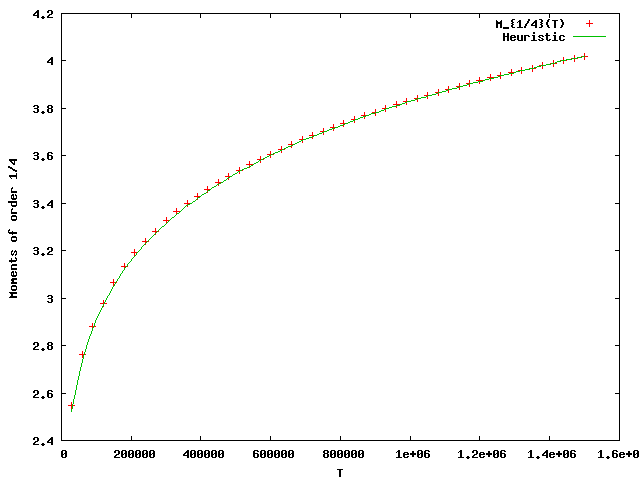}
      }
      \subfigure{
        \includegraphics[width=6cm]{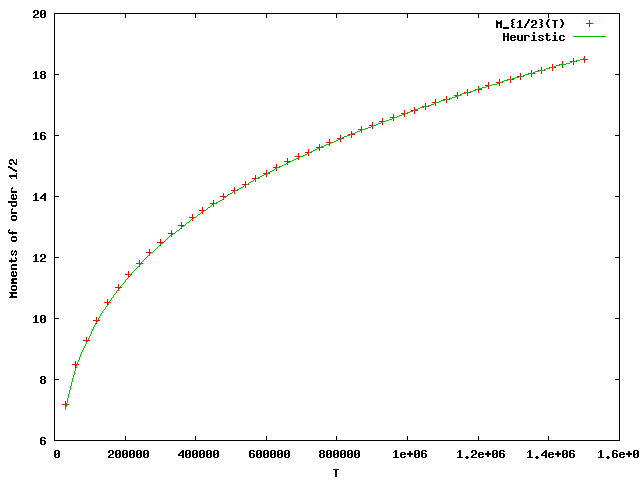}
      }
    }
    \hspace*{-1.5cm}\mbox{
      \subfigure{
        \includegraphics[width=6cm]{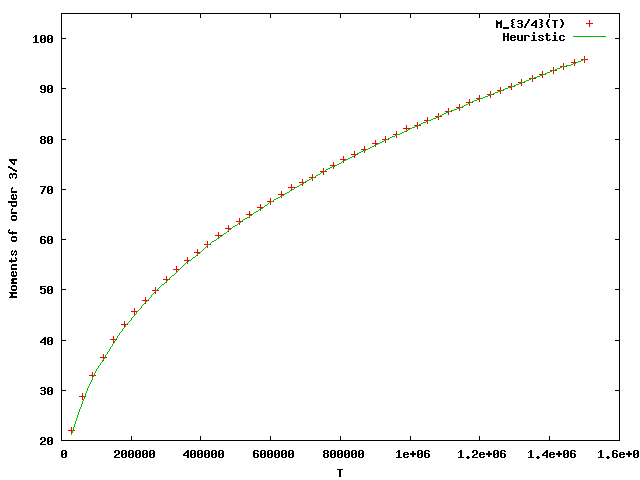}
      }
      \subfigure{
        \includegraphics[width=6cm]{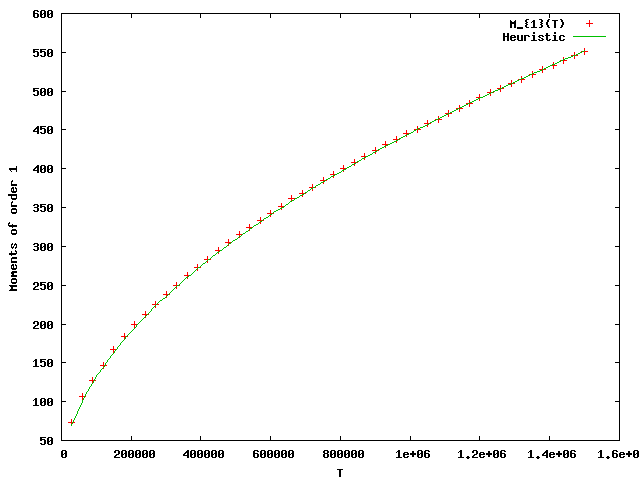}
      }
    }
    \vskip-10pt\caption{\label{14_moments} Moments of order
      $\frac{1}{4}$, $\frac{1}{2}$, $\frac{3}{4}$ and $1$ of the
      regulators of the twists of $14a1$ and the functions given by
      the heuristics.} 
  \end{small}
\end{figure}

We have $E(\Q)_{{\rm tors}} \simeq \Z/3\Z$, and there is no curve in
the isogeny class having full rational 2-torsion. Hence the heuristics
predict that
$$
M_k(T) \sim A_k T^{k/2}
\log(T)^{\frac{k(k+1)}{2}+\frac{1}{2}(4^{-k}+2^{-k}) -1} 
$$
for some constant $A_k$. We computed $A_k$ numerically to fit the data
(values found: $A_{1/4} \approx 0.82$, $A_{1/2} \approx 0.56$,
$A_{3/4} \approx 0.33$, $A_{1} \approx 0.17$) and we plot the graph of
the function given by the heuristics and the points $(T,M_k(T))$ for
$T=1,2,\dots 150 \times 3 \cdot 10^4, $ and $k=1/4$, $1/2$, $3/4$ and
$1$ (see Figure~\ref{14_moments}).

\begin{figure}[htbp]
  \begin{small}
    \hspace*{-1.5cm}\begin{minipage}[t]{8cm}
      \includegraphics[width=6cm]{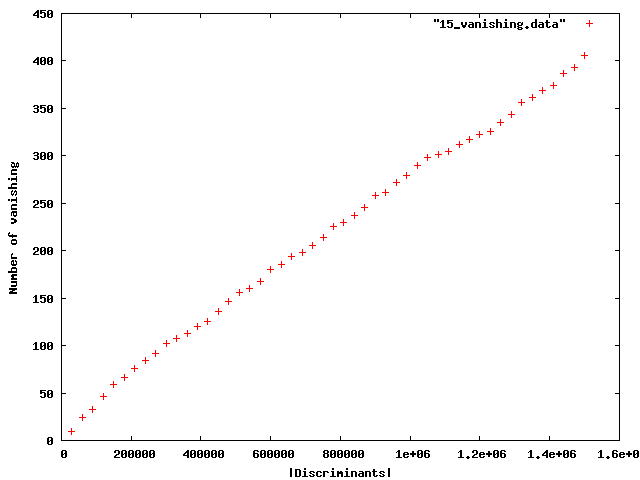}
      \vskip-10pt\caption{\label{15_extra_vanishing} Number of
        extra-vanishing for $L'(E_d,1)$ for $E=15a1$.} 
    \end{minipage}
    \hskip.3cm
    \begin{minipage}[t]{8cm}
      \includegraphics[width=6cm]{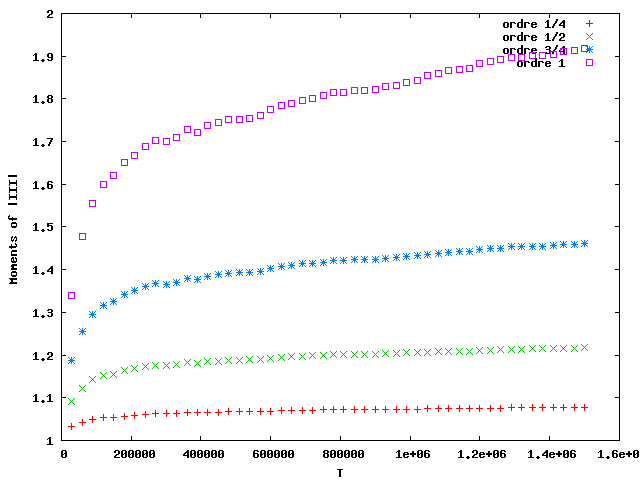}
      \vskip-10pt\caption{\label{15_sha} Moments of order
        $\frac{1}{4}$, $\frac{1}{2}$, $\frac{3}{4}$ and $1$ of the
        order of the Tate-Shafarevich groups of the twists of
        15a1. The heuristics suggest that the moments of order $k<1$
        tend to a constant (depending on $k$) whereas the moment of
        order $1$ should tend to $\infty$.}
    \end{minipage}
  \end{small}
\end{figure}

\begin{figure}[htbp]
  \begin{small}
    \centering
    \hspace*{-1.5cm}\mbox{
      \subfigure{
        \includegraphics[width=6cm]{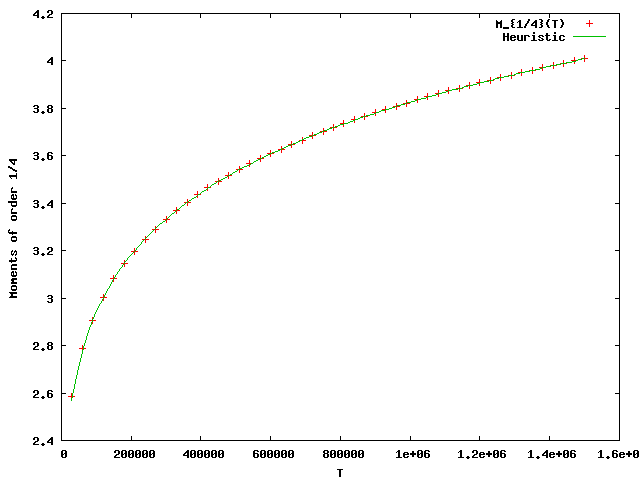}
      }
      \subfigure{
        \includegraphics[width=6cm]{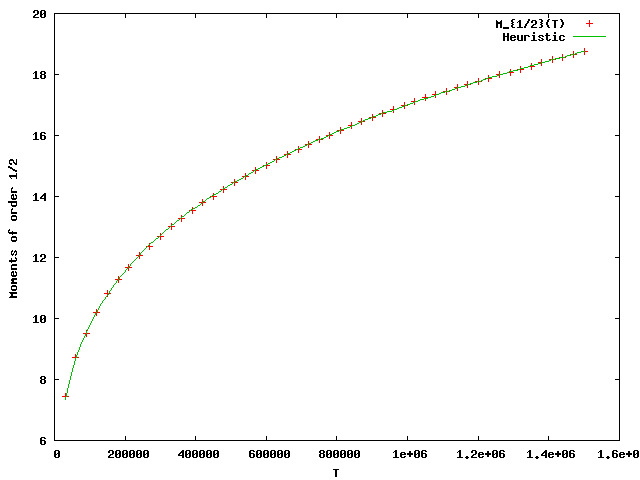}
      }
    }
    \hspace*{-1.5cm}\mbox{
      \subfigure{
        \includegraphics[width=6cm]{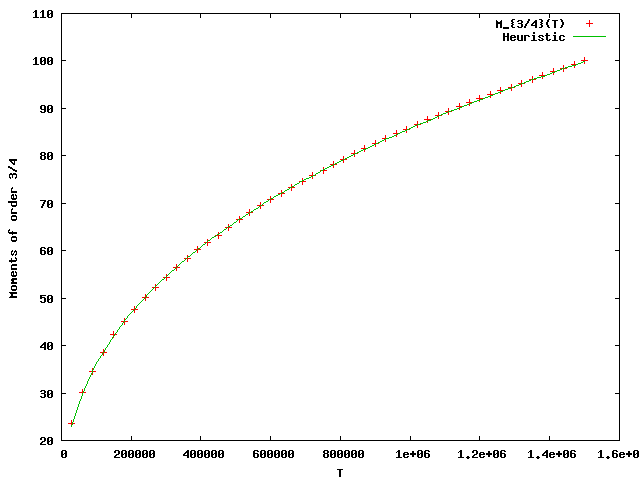}
      }
      \subfigure{
        \includegraphics[width=6cm]{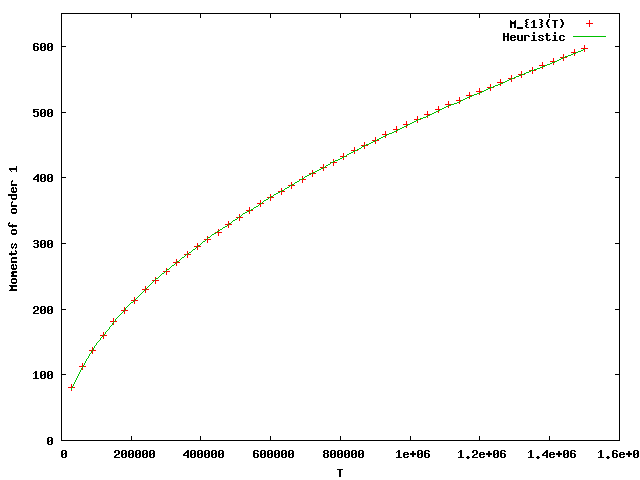}
      }
    }  
    \vskip-10pt\caption{\label{15_moments} Moments of order
      $\frac{1}{4}$, $\frac{1}{2}$, $\frac{3}{4}$ and $1$ for the
      regulators of the twists of $15a1$ and the functions given by
      the heuristics.} 
  \end{small}
\end{figure}

\subsection{The curve 15a1}

The curve $E$ is defined by $E \; : \; y^2+ xy +y =x^3+x^2-10x
-10$. It has conductor $N = 15$ and rank $0$ over $\Q$. We have $w_{3}
= w_{5} = +1$.

\begin{itemize}
\item Number of discriminants: 71254.
\item Largest generator: $\approx 19352$ (for $d=-1297619$).
\item Number of extra-vanishing: 406.
\end{itemize}

We have $E(\Q)_{{\rm tors}} \simeq \Z/4\Z \times \Z/2\Z$, hence it
has full $2$-torsion. The heuristics predict that
$$
M_k(T) \sim A_k T^{k/2} \log(T)^{k(k+1)/2+ 4^{-k} -1}
$$
for some constant $A_k$. We computed $A_k$ numerically to fit the data
(values found: $A_{1/4} \approx 0.97$, $A_{1/2} \approx 0.75$,
$A_{3/4} \approx 0.47$, $A_{1} \approx 0.25$) and we plot the graph of
the function given by the heuristic and the points $(T,M_k(T))$ for
$T=1,2,\dots 150 \times 3 \cdot 10^4, $ and $k=1/4$, $1/2$, $3/4$ and
$1$ (see Figure~\ref{15_moments}).

\end{document}